\newtheorem{theorem}{Theorem}[section]
\newtheorem{lemma}[theorem]{Lemma}
\newtheorem{proposition}[theorem]{Proposition}
\newtheorem{corollary}[theorem]{Corollary}
\theoremstyle{definition}
\newtheorem{definition}[theorem]{Definition}
\newtheorem{example}[theorem]{Example}
\newtheorem{fact}[theorem]{Fact}
\newtheorem{problem}[theorem]{Problem}
\theoremstyle{remark}
\newtheorem{remark}[theorem]{Remark}
\newcommand{\N}{\mathbb{N}}
\newcommand{\eps}{\varepsilon}
\numberwithin{equation}{section} \linespread{1.6}
\begin{document}
\setcounter{page}{1}

\title[Stability of Banach spaces via nonlinear $\varepsilon$-isometries]{{\bf Stability of Banach spaces via nonlinear $\varepsilon$-isometries }}

\author[D. Dai, Y. Dong]{Duanxu Dai$^1$ and Yunbai Dong$^{*,2}$}

\address{$^{1}$ School of Mathematical Sciences, Xiamen University, Xiamen 361005, P.R. China.} \email{\textcolor[rgb]{0.00,0.00,0.84}{dduanxu@163.com}}

\address{$^{1}$ Present address: Department of Mathematics, Texas A\&M University, College Station, TX 77843-3368, USA.}

\address{$^{2}$ School of Mathematics and Computer, Wuhan Textile University, Wuhan 430073, P.R. China.}

\email{\textcolor[rgb]{0.00,0.00,0.84}{baiyunmu301@126.com}}

\thanks{$^*$ Corresponding author.}


\subjclass[2010]{Primary 46B04; Secondary 46B20, 49J50, 54C65,
54C60.}

\keywords{$\varepsilon$-isometry, stability, Banach space,
rotundity, smoothness, set valued mapping.}

\begin{abstract}
In this paper, we prove that the existence of an $\varepsilon$-isometry from a separable Banach space $X$ into $Y$ (the James space or a reflexive space) implies the existence of a linear isometry from $X$ into $Y$. Then we present a set valued mapping version lemma on non-surjective $\varepsilon$-isometries of Banach spaces. Using the above results, we also discuss the rotundity and smoothness of Banach spaces under the perturbation by $\varepsilon$-isometries.

\end{abstract} \maketitle

\section{Introduction}

Throughout the paper $X$ and $Y$ denote real Banach spaces. An $isometry$ from $X$ to $Y$ is a mapping $f: X\rightarrow Y$ such that
$\|f(x)-f(y)\|=\|x-y\|$
for all $x,y\in X$.

For surjective isometries, Mazur and Ulam \cite{Maz} have given a
 result. They showed that if $f$ is a surjective isometry
between two real Banach spaces, then $f$ is affine. While a
non-surjective isometry is not necessarily affine, for example, defining $f:R\rightarrow\ell_\infty^2$ by $f(t)=(t,sint)$, $t\in R$.

In 1967, Figiel \cite {Fig} proved the following remarkable result
on non-surjective isometries, which is an appropriate substitute of
the Mazur-Ulam theorem. He showed that for any isometry $f:X\rightarrow Y$ with $f(0)=0$ there is a linear operator $P:\overline{\rm span} f(X)\rightarrow X$ of norm one such that $P\circ f$ is the identity on $X$.

We next recall the following concept which is related to isometries
of Banach spaces.

\begin{definition}
Given $\varepsilon>0$, a mapping $f:X\rightarrow Y$ is called an
$\varepsilon-isometry$ if
$$\big|\|f(x)-f(y)\|-\|x-y\|\big|\leq\varepsilon
\text{~~~~for~~all}~~x,y\in X.$$
\end{definition}

These mappings were first studied by Hyers and Ulam \cite {Hye}, and
they asked if for every surjective
$\varepsilon$-isometry $f:$ $X\rightarrow Y$ with $f(0)=0$ there
exists a surjective linear isometry $U:$ $X\rightarrow Y$ and
$\gamma >0$ such that
\begin{align}\label{inequa}\|f(x)-U(x)\|\leq\gamma\varepsilon,\;\text{for\;all\;} x\in X.\end{align}

Based on a result of Gruber \cite{gru}, Gevirtz \cite{gev}
proved that the answer to the Hyers-Ulam problem is positive with
$\gamma=5$. Finally, Omladi\v{c} and \v{S}emrl \cite {Oml} showed that $\gamma=2$ is the sharp constant in (\ref{inequa}).
One can read a long survey of the important topic about the perturbations of isometries on Banach spaces in \cite[page 341-372]{B-L} by Benyamini and Lindenstrauss.

In light of Figiel' theorem, the study of non-surjective
$\varepsilon$-isometries has also brought mathematicians great
attention. Qian \cite {Qia}  proposed the following problem in
1995.

\begin{problem}\label{probl}Does there exist a constant $\gamma>0$ depending only on $X$ and $Y$ with the
following property: For each $\varepsilon$-isometry $f:$
$X\rightarrow Y$ with $f(0)=0$ there is a bounded linear operator
$T:$ $L(f)\rightarrow X$ such that
\begin{align}\label{ineq}
\|Tf(x)-x\|&\leq \gamma\varepsilon,\text{\;\;for\;all }\;x\in X,
\end{align}
where $L(f)$ $=$ $\overline{\rm {span}}f(X)$?
\end{problem}

Then he showed that the answer is affirmative if both $X$ and $Y$
are $L_p$ spaces. \v{S}emrl and V\"{a}is\"{a}l\"{a} \cite {Sem}
further presented a sharp estimate of inequality (\ref{ineq}) with
$\gamma= 2$ if both of them are $L_p$ spaces for $1 < p <\infty$.

The answer to Problem \ref{probl} may be affirmative for some
classical Banach spaces $X$ and $Y$. But Qian
further gave a counterexample.

\begin{example}Given $\varepsilon> 0$, let $Y$ be a separable Banach space admitting a
uncomplemented closed subspace $X$. Assume that g is a bijective
mapping from $X$ onto the closed unit ball $B_Y$ of Y with $g(0)=0$.
We define
$$f:X\rightarrow Y \;by\;f(x)=x+\varepsilon g(x)/2,\text{\;\;for\; all} \;x\in X.$$

Then $f$ is an $\varepsilon$-isometry with $f(0) = 0$ and $ Y=L(f)$.
But there are no such $T$ and $\gamma$ satisfying inequality
(\ref{ineq}).

\end{example}

 Recently, Cheng, Dong and Zhang showed the
following theorem in \cite{Che}.

\begin{theorem}\label{mainlemma} Let $X$ and $Y$ be Banach
spaces, and let $f: X\rightarrow Y$ be an $\varepsilon-$ isometry
with $f(0)=0$ for some $\varepsilon\geq 0$. Then for every $x^*\in
X^*$, there exists $\phi \in Y^*$ with $\|\phi\|=\|x^*\|$ such
that

$$|\langle \phi,f(x)\rangle-\langle
x^*,x\rangle|\leq 4\varepsilon \|x^*\|,\text{\;\;for\;all} \;x\in X.$$

\end{theorem}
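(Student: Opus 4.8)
The plan is to reduce the construction of $\phi$ to a single family of scalar inequalities, and then to verify those inequalities by exploiting the isometric rigidity that $f$ recovers after rescaling. I may assume $\|x^*\|=1$. By the Hahn--Banach theorem in its consistency (sandwich) form, a functional $\phi\in Y^*$ with $\|\phi\|\le1$ and $\langle x^*,x\rangle-4\varepsilon\le\langle\phi,f(x)\rangle\le\langle x^*,x\rangle+4\varepsilon$ for all $x\in X$ exists precisely when the linear constraints $\langle\phi,\cdot\rangle\le\|\cdot\|$ together with $\pm\langle\phi,f(x)\rangle\le\pm\langle x^*,x\rangle+4\varepsilon$ are consistent. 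Pairing the constraint vectors against arbitrary real multipliers, this consistency is equivalent to the estimate
\begin{equation*}
\Big\|\sum_{k=1}^{m}a_k f(x_k)\Big\|\ \ge\ \sum_{k=1}^{m}a_k\langle x^*,x_k\rangle-4\varepsilon\sum_{k=1}^{m}|a_k| \tag{$\ast$}
\end{equation*}
for every finite choice of $a_k\in\mathbb{R}$ and $x_k\in X$. So it suffices to prove $(\ast)$.

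To see why $(\ast)$ is plausible, observe that when $f$ is the identity it is immediate from $\|x^*\|=1$, since the right-hand side is then $\langle x^*,\sum a_k x_k\rangle$; the whole issue is to dominate the nonlinear defect of $f$ by $\varepsilon\sum|a_k|$. The mechanism I would use is rescaling: the maps $f_n(x)=f(nx)/n$ are $(\varepsilon/n)$-isometries fixing $0$, so along a free ultrafilter $\mathcal U$ the assignment $h(x)=\big[(f(nx)/n)_n\big]$ defines a genuine isometry $h:X\to Y_{\mathcal U}$ into the ultrapower with $h(0)=0$. Applying Figiel's theorem (quoted above) to $h$ produces a norm-one linear operator $P$ with $P\circ h=\mathrm{id}_X$, so $x^*\circ P$ extends by Hahn--Banach to $\Psi\in(Y_{\mathcal U})^*$ with $\|\Psi\|\le1$ and $\langle\Psi,h(x)\rangle=\langle x^*,x\rangle$. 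Testing $\Psi$ against $\sum a_k h(x_k)$ yields the infinite-scale form $\sum a_k\langle x^*,x_k\rangle\le\lim_{\mathcal U}\frac1n\big\|\sum a_k f(nx_k)\big\|$ with no error term, and the finite-scale inequality $(\ast)$ must then be extracted, paying the explicit loss $4\varepsilon\sum|a_k|$.

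The main obstacle is exactly this extraction, and more generally the proof of $(\ast)$: lower bounding the norm of a signed combination $\sum a_k f(x_k)$ cannot be done by the triangle inequality, since it intrinsically requires a dual element, which is the very object being constructed. The honest difficulty is thus to produce, for each finite configuration, a $\psi\in B_{Y^*}$ with $\sum a_k\langle\psi,f(x_k)\rangle\ge\sum a_k\langle x^*,x_k\rangle-4\varepsilon\sum|a_k|$. The natural candidates --- a weak$^*$ cluster point of functionals $\psi_t$ norming $f(tw)-f(-tw)$ for a direction $w$ that nearly norms $x^*$ --- only pin $\langle\phi,f(x)\rangle$ between $\min$ and $\max$ of $\psi'(x)$ over the set $J(w)$ of norming functionals of $w$, so they succeed at \emph{smooth} points but must be supplemented, via the Hahn--Banach reduction above and a Bishop--Phelps/density argument, to cover an arbitrary $x^*$. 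The two summands of $2\varepsilon$ combining into the $4\varepsilon$ arise from the estimates $\|f(tw)\|\ge t-2\varepsilon$ and $\|f(tw)-f(x)\|\le\|tw-x\|+\varepsilon$, together with the symmetric bound coming from $-tw$.

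Finally, the construction gives $\|\phi\|\le1=\|x^*\|$ directly. For the reverse inequality I would scale: from $\langle\phi,f(tx)\rangle\ge t\langle x^*,x\rangle-4\varepsilon$ and $\|f(tx)\|\le t\|x\|+\varepsilon$ one gets $\|\phi\|\ge\langle\phi,f(tx)\rangle/\|f(tx)\|\to\langle x^*,x\rangle/\|x\|$ as $t\to\infty$, and taking the supremum over $x$ forces $\|\phi\|\ge\|x^*\|=1$. Hence $\|\phi\|=\|x^*\|$, and undoing the normalization recovers the stated estimate with $4\varepsilon\|x^*\|$.
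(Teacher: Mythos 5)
First, a point of reference: this theorem is not proved in the paper at all --- it is the Cheng--Dong--Zhang theorem, quoted verbatim from \cite{Che}. So your attempt has to stand on its own, and it does not: your Mazur--Orlicz reduction of the statement to the family of inequalities $(\ast)$ is correct, and your closing argument that $\|\phi\|=\|x^*\|$ is fine, but the core inequality $(\ast)$ is never established, and both mechanisms you offer for it fail. The ultrapower/Figiel argument only yields the infinite-scale inequality $\sum_k a_k\langle x^*,x_k\rangle\le\lim_{\mathcal U}\frac1n\|\sum_k a_kf(nx_k)\|$, which is strictly weaker than $(\ast)$, and the ``extraction'' you defer is not merely difficult --- it is impossible, because the intermediate inequality $\lim_{\mathcal U}\frac1n\|\sum_k a_kf(nx_k)\|\le\|\sum_k a_kf(x_k)\|+4\varepsilon\sum_k|a_k|$ it would require is false. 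Concretely, let $f:\mathbb{R}\to\ell_\infty^2$ be given by $f(t)=(t,\max(|t|-1,0))$; this is a genuine isometry (hence an $\varepsilon$-isometry for every $\varepsilon\ge0$) with $f(0)=0$. Taking $x_1=1$, $x_2=-1$, $a_1=a_2=1$, one gets $\sum_k a_kf(x_k)=(0,0)$, while $\frac1n\|\sum_k a_kf(nx_k)\|=\frac1n\|(0,2(n-1))\|\to2$, so the extraction inequality reads $2\le 8\varepsilon$ and fails for all $\varepsilon<1/4$. Rescaling at infinity destroys exactly the finite-scale information that $(\ast)$ is about; any correct proof must construct the dual functional at finite scale, which is what Cheng--Dong--Zhang actually do in \cite{Che}.

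Your fallback sketch has a second genuine defect. The $w^*$-cluster points of functionals norming $f(tw)-f(-tw)$ do, as you say, only pin $\langle\phi,f(x)\rangle$ between $\min$ and $\max$ of $\langle\cdot,x\rangle$ over the set $J(w)$ of norming functionals of $w$; this identifies $x^*$ only when the norm is Gateaux differentiable at $w$ with derivative $x^*$. But the supplement you invoke, ``Bishop--Phelps/density,'' is the wrong tool: Bishop--Phelps gives norm-density of norm-\emph{attaining} functionals, whereas the functionals your argument reaches are the Gateaux \emph{derivatives}, in general a much smaller set --- for $\mathbb{R}^2$ with the hexagonal norm there are exactly six of them (not dense in $S_{X^*}$), and for $\ell_1(\Gamma)$ with $\Gamma$ uncountable there are none at all. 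Closing this gap requires ingredients absent from your proposal (for instance: Mazur's theorem on separable spaces, the convexity and $w^*$-compactness of the set of admissible $\phi$ so as to pass from derivatives to their $w^*$-closed convex hull, and a reduction of general $X$ to its separable subspaces), together with the actual constant bookkeeping at smooth points, which you only gesture at --- note in passing that $\|f(tw)\|\ge t-\varepsilon$, not $t-2\varepsilon$. As written, the heart of the theorem --- producing $\phi$ for an \emph{arbitrary} $x^*$ with the uniform constant $4\varepsilon\|x^*\|$ --- is missing.
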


In section 2, we introduce some notations and propositions which will be useful
for the proof of our main results, here we refer the interested readers
to \cite[page 19, 102-109]{Phe} and \cite[page 425-516]{Meg} for more details.

In section 3, by using the Rosenthal's $\ell_1$ theorem
and the Cheng-Dong-Zhang theorem (i.e., Theorem \ref{mainlemma}) we
first show that if there is an $\eps$-isometry from a separable
Banach space $X$ into a Banach space $Y$ containing no $\ell_1$,
then there exists a linear isometry from $X$ into $Y^{**}$. As a corollary, we show that the existence of an $\varepsilon$-isometry from a separable Banach space $X$ into $Y$ (the
James space or a reflexive space) implies the existence of a linear
isometry from $X$ into $Y$.

In section 4, we present an equivalent version of Problem \ref{probl} via
continuous linear selections of a set valued mapping, i.e., Problem
\ref{prob} and its weaker solution: Lemma \ref{main1}, by which we study the relationship between differentiability and continuous selections of subdifferential mappings in the setting of $\varepsilon$-isometries (i.e., Proposition \ref{propo}). Finally, we discuss the stability of rotundity and smoothness in Banach spaces under the perturbation by $\varepsilon$-isometries, i.e., [(ii), Proposition \ref{propo}] and Proposition \ref{prop2}.

In this paper, let $X^*$ ( $Y^*$ ) be the dual space of $X$ ( $Y$ ) and $Y^{**}$ be the second dual space of $Y$. We denote $S_X$ ( $S_{X^*}$, $S_{Y^*}$ ), $B_X$ ( $B_{X^*}$ ), $2^{Y}$ ( $2^{X^*}$) by the unit sphere, closed unit ball of $X$ ( $X^*$,  $Y^*$ ), all subsets of $Y$ ( $X^*$ ), respectively.

\section{Preliminaries and notation}

A set valued mapping $F:X \rightarrow 2^{Y}$ is said to be
usco provided it is nonempty compact valued and upper
semicontinuous, i.e., $F(x)$ is nonempty compact for each $x\in X$
and $\{x\in X: F(x)\subset U\}$ is open in $X$ whenever $U$ is open
in $Y$. We say that $F$ is usco at $x\in X$ if $F$ is nonempty
compact valued and upper semicontinuous at $x$, i.e., for every open
set $V$ of $Y$ containing $F(x)$ there exists a open neighborhood $U$
of $X$ such that $F(U)\subset V$. Therefore, $F$ is usco if and only
if $F$ is usco at each $x\in X$.

A mapping $\varphi: X \rightarrow Y$ is called a selection of $F$
if $\varphi(x)\in F(x)$ for each $x\in X$, moreover, we say
$\varphi$ is a continuous (linear) selection of $F$ if $\varphi$ is
a continuous (linear) mapping. We denote the graph of $F$ by
$G(F)\equiv\{(x,y)\in X\times Y:y\in F(x)\}$, we write $F_1\subset
F_2$ if $G(F_1)\subset G(F_2)$. A usco mapping $F$ is said to be minimal
if $E=F$ whenever $E$ is a usco mapping and $E \subset F$.

There are many useful statements about usco mappings and
subdifferential mappings in \cite[page 19, 102-109]{Phe}. In section 3, by using some notions from \cite[page 19, 102-109]{Phe} and combining with the Cheng-Dong-Zhang theorem, we have Proposition \ref{propo} which concerns differentiability and continuous selections of subdifferential mappings.

Recall that a convex function $g$ defined on a nonempty open convex subset $C$ of $X$ is said to be Gateaux
differentiable at $x\in C$ provided that
$\lim_{t\rightarrow 0} \frac{g(x+ty)-g(x)}{t}$
exists for each $y\in X$, which is concerned about a continuous selection of its subdifferential mapping in \cite[Proposition 2.8, page 19]{Phe} as follows:

\begin{proposition} \label{prop1} Suppose that $X$ is a Banach space, $g$ is a continuous
convex function on a nonempty open convex subset $C$ of $X$. Then
$g$ is Gateaux differentiable at each point $x\in C$ if and only if
there is a norm-$w^*$ continuous selection of its subdifferential
mapping $\partial g: C\rightarrow 2^{X^*}$ defined for every $x\in
C$ by
$$\partial g(x)=\{x^*\in X^*: g(y)-g(x)\geq x^*(y-x),\;\text{for\;all\;}y\in C\}.$$
and that $X$ is Gateaux differentiable (smooth) if and only if
$\|\cdot\|$ is Gateaux differentiable at each point of $S_X$ if and
only if $\partial \|\cdot\|$ is single valued at each point of
$S_X$.
\end{proposition}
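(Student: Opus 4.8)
The plan is to treat the statement as two linked assertions: the main equivalence for a general continuous convex $g$, and the smoothness corollary obtained by specializing to $g=\|\cdot\|$. Throughout I would rely on three standard facts about the subdifferential of a continuous convex function on an open convex set $C$. First, $\partial g(x)$ is nonempty, convex and $w^*$-compact for every $x\in C$, a consequence of the Hahn--Banach theorem together with the local Lipschitz property of continuous convex functions. Second, the pointwise characterization that $g$ is Gateaux differentiable at $x$ precisely when $\partial g(x)$ is a singleton, in which case its unique element is the derivative; this follows by comparing the subgradient inequality with the one-sided difference quotients, since the right directional derivative $y\mapsto\lim_{t\to0^+}t^{-1}(g(x+ty)-g(x))$ is the support function of $\partial g(x)$. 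Third, $\partial g$ is a norm-to-$w^*$ usco map, i.e.\ it has $w^*$-compact values and a norm-$w^*$ closed graph, again via local boundedness.

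For the forward direction, suppose $g$ is Gateaux differentiable at every point of $C$. By the pointwise fact, $\partial g(x)$ is single valued on $C$, so the only candidate selection is $\varphi=\partial g$. It then remains to observe that a single-valued usco map is automatically continuous in the relevant topology: given a $w^*$-open set $V\ni\varphi(x)=\partial g(x)$, upper semicontinuity furnishes a norm neighborhood $U$ of $x$ with $\partial g(U)\subset V$, that is, $\varphi(U)\subset V$. Hence $\varphi$ is a norm-$w^*$ continuous selection.

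The converse is where the real work lies, and I would handle it by a sandwich estimate. Let $\varphi$ be a norm-$w^*$ continuous selection of $\partial g$, fix $x\in C$ and a direction $y$, and take $t>0$ small. Applying the subgradient inequality to $\varphi(x)\in\partial g(x)$ gives a lower bound, while applying it to $\varphi(x+ty)\in\partial g(x+ty)$ evaluated at the point $x$ gives an upper bound, yielding
\[
\varphi(x)(y)\ \le\ \frac{g(x+ty)-g(x)}{t}\ \le\ \varphi(x+ty)(y).
\]
Letting $t\to0^+$ and using $\varphi(x+ty)\to\varphi(x)$ in the $w^*$ topology, the two outer terms converge to $\varphi(x)(y)$, so the right-hand derivative of $g$ at $x$ in the direction $y$ exists and equals $\varphi(x)(y)$. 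Running the same argument with $-y$ in place of $y$ controls the left-hand derivative, and the two match; thus the two-sided limit defining the Gateaux derivative exists and equals $\varphi(x)$. I expect this squeezing step --- specifically the exchange of limits justified by the norm-$w^*$ continuity of $\varphi$ --- to be the crux of the argument.

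Finally, the smoothness statement follows by applying the pointwise characterization to $g=\|\cdot\|$. By positive homogeneity of the norm, Gateaux differentiability of $\|\cdot\|$ at a nonzero point is equivalent to differentiability at the corresponding point of $S_X$, so ``$X$ is smooth'' is by definition the same as ``$\|\cdot\|$ is Gateaux differentiable at each point of $S_X$.'' The pointwise fact then translates this into ``$\partial\|\cdot\|$ is single valued at each point of $S_X$,'' completing the chain of equivalences.
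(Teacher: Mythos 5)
Your proof is correct, and there is nothing in the paper to compare it against: the paper states this proposition as a known preliminary, citing \cite[Proposition 2.8, page 19]{Phe} without proof. Your argument --- single-valuedness plus usco gives norm-$w^*$ continuity in one direction, and the sandwich $\varphi(x)(y)\le t^{-1}(g(x+ty)-g(x))\le\varphi(x+ty)(y)$ with the $w^*$-limit exchange in the other --- is essentially the standard proof given in Phelps, so you have in effect reconstructed the cited source's argument.
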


The following classical results and concepts about rotundity and smoothness of
Banach spaces can be found in \cite[page 425-516]{Meg}.

Recall that

(i)$X$ is said to be rotund if every point in the unit sphere $S_X$ is an extreme point in the closed unit ball $B_X$;

(ii)$X$ is said to be strongly rotund provided the diameter of $C\cap tB_X$ tends to $0$ as $t$ decreases to $d(0,C)$,
whenever $C$ is a nonempty convex subset of $X$.

(iii) $X$ is said to be uniformly Gateaux smooth provided
$\lim_{t\rightarrow 0} \frac{\|x+ty\|-\|x\|}{t}$
exists for each $x\in S_X$ and $y\in X$, and furthermore the
convergence is uniform for $x$ in $S_X$ whenever $y$ is a fixed
point of $S_X$;

(iv) $X$ is said to be Fr\'{e}chet smooth provided the limit in (iii)
exists for each $x\in S_X$ and $y\in X$, and furthermore the
convergence is uniform for $y$ in $S_X$ whenever $x$ is a fixed
point of $S_X$;

(v) $X$ is said to be uniformly Fr\'{e}chet smooth (i.e.,
uniformly smooth) provided the limit in (iii) exists for each $x\in
S_X$ and $y\in X$, and furthermore the convergence is uniform for
$(x,y)$ in $S_X \times S_X$.

 Here we will recall an equivalent definition of $w$ ($w^*$)-uniformly rotund introduced by \v{S}mulian (see \cite[page 464, 466]{Meg}).

\begin{definition} \label{defi} $X$ ($X^*$) is $w$ ($w^*$)-uniformly
rotund whenever $\{x_n\}$ and $\{y_n\}$ are sequences in $S_X$ (
$S_{X^*}$ ) and $\|\frac{1}{2}(x_n+y_n)\|\rightarrow1$, it follows
that $\{x_n-y_n\}$ weakly (resp. weakly star) converges to 0. In
particular, $X$ is said to be uniformly rotund if $\{x_n-y_n\}$ norm-
converges to 0.
\end{definition}

In section 4, we will provide a generalization of Proposition \ref{proposition} in \cite[page 425-516]{Meg}. That is, Proposition \ref{prop2}.

\begin{proposition} \label{proposition} Suppose that $X^*$ is the dual space of $X$. Then

(i) $X$ is rotund (smooth) if $X^*$ is smooth (rotund ); If, in addition, $X$ is reflexive, then the converse also holds;

(ii) $X$ is strongly rotund if and only if $X^*$ is Fr\'{e}chet
smooth;

(iii) $X$ is strongly rotund if and only if $X$ is reflexive, rotund
and has the Radon-Riesz property ( Recall that $X$ has the
Radon-Riesz property if, whenever $\{x_n\}$ is a sequence in $X$ and
$x\in X$ such that $x_n$ weakly converges to $x$ and $\|x_n\|$
converges to $\|x\|$, it follows that $x_n$ strongly converges to
$x$);

(iv) $X$ is weakly uniformly rotund if $X^*$ is uniformly Gateaux
smooth; The converse also holds for every reflexive $X$;

(v) $X$ is uniformly rotund (uniformly smooth) if and only if $X^*$
is uniformly smooth (uniformly rotund).

(vi) $X$ is Fr\'{e}chet smooth if $X^*$ is strongly rotund; If, in
addition, $X$ is reflexive, then the converse also holds;

(vii) $X$ is uniformly Gateaux smooth if and only if $X^*$ is
$w^*$-uniformly rotund;
\end{proposition}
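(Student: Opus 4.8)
The common engine behind all seven items is the duality between the subdifferential (support) mapping and the canonical embedding $J\colon X\to X^{**}$. For $x\in S_X$ set $D(x)=\{f\in S_{X^*}:f(x)=1\}=\partial\|\cdot\|(x)$; by Proposition \ref{prop1}, $X$ is smooth exactly when each $D(x)$ is a singleton, whereas $X$ is rotund exactly when no $f\in S_{X^*}$ attains its norm at two distinct points of $S_X$ (if $x\neq y$ and $\|\tfrac12(x+y)\|=1$, any $f$ norming $\tfrac12(x+y)$ norms both $x$ and $y$, and conversely). I would first settle (i) with these two equivalences. If $X^*$ is smooth and $\|\tfrac12(x+y)\|=1$ with $x,y\in S_X$, choose $f\in S_{X^*}$ norming $\tfrac12(x+y)$; then $f(x)=f(y)=1$, so $Jx$ and $Jy$ are both support functionals of $f$ in $S_{X^{**}}$, whence $Jx=Jy$ by smoothness of $X^*$ and $x=y$. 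Dually, if $X^*$ is rotund and $f,g\in D(x)$, then $\tfrac12(f+g)\in D(x)$ forces $\|\tfrac12(f+g)\|=1$, so $f=g$ and $X$ is smooth. The converses use reflexivity only to identify the support functionals of $f\in S_{X^*}$ inside $X^{**}$ with the norming points of $f$ in $S_X$, after which the same two computations run backwards.

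For the sequential and uniform items (iv), (v), (vii) I would argue through the moduli and \v{S}mulian's description in Definition \ref{defi}. For (vii), the plan is to quantify uniform Gateaux smoothness in a fixed direction $y$: picking $x_n\in S_X$ nearly norming $\tfrac12(f_n+g_n)$ and testing the inequalities $f_n(x_n+ty)\le\|x_n+ty\|$ and $g_n(x_n-ty)\le\|x_n-ty\|$ yields $t\,\langle f_n-g_n,y\rangle\le\|x_n+ty\|+\|x_n-ty\|-2+o(1)$, so that, since uniform Gateaux smoothness makes $\sup_{x\in S_X}(\|x+ty\|+\|x-ty\|-2)$ equal to $o(t)$ as $t\to0$, the condition $\|\tfrac12(f_n+g_n)\|\to1$ forces $\langle f_n-g_n,y\rangle\to0$; this is precisely $w^*$-uniform rotundity of $X^*$, and the reverse implication is the same estimate read backwards. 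Statement (iv) then follows by applying the forward direction of (vii) to $X^*$ and restricting the resulting $w^*$-uniform rotundity of $X^{**}$ to $J(X)$, which gives weak-uniform rotundity of $X$; reflexivity is needed only for the converse, to promote the weak topology of $X$ back to the $w^*$-topology of $X^{**}$. For the fully uniform case (v) I would invoke the Lindenstrauss duality formula $\rho_{X^*}(\tau)=\sup\{\tau\eps/2-\delta_X(\eps):0\le\eps\le2\}$ relating the modulus of smoothness of $X^*$ to the modulus of convexity of $X$, from which uniform rotundity of $X$ (i.e.\ $\delta_X(\eps)>0$ for $\eps>0$) is equivalent to uniform smoothness of $X^*$ (i.e.\ $\rho_{X^*}(\tau)/\tau\to0$); exchanging $X$ and $X^*$ gives the parenthetical half.

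The ``strong'' cluster (ii), (iii), (vi) carries the real content. For (iii) I would prove strong rotundity $\Rightarrow$ reflexivity by a James-type argument: for $f\in S_{X^*}$ the affine hyperplane $C=f^{-1}(1)$ has $d(0,C)=1$, so the hypothesis $\mathrm{diam}(C\cap tB_X)\to0$ as $t\downarrow1$ makes every minimizing sequence Cauchy, hence $f$ attains its norm, and James' theorem then yields reflexivity; rotundity and the Radon--Riesz property follow from the same minimizing-sequence collapse (the latter by normalizing a weakly convergent sequence with convergent norms onto a suitable hyperplane $C$). Conversely, reflexive $+$ rotund $+$ Radon--Riesz recovers the diameter condition: reflexivity gives weak cluster points of minimizing sequences, rotundity makes the nearest point unique, and Radon--Riesz upgrades the weak convergence to norm convergence, collapsing the diameters. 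For (ii) and (vi) I would use \v{S}mulian's Fr\'{e}chet lemma in its dual form, namely that the norm of $X^*$ (resp.\ $X$) is Fr\'{e}chet differentiable at $f$ (resp.\ $x$) iff every nearly-norming sequence --- which for a dual norm may be taken in the predual --- converges in norm; reading this against the minimizing-sequence characterization of strong rotundity from (iii) identifies Fr\'{e}chet smoothness of $X^*$ with strong rotundity of $X$, giving (ii), and the same lemma applied one level down gives (vi). Here no separate reflexivity hypothesis is needed in (ii), since strong rotundity of $X$ already forces reflexivity by (iii), whereas in (vi) Fr\'{e}chet smoothness of $X$ does not, so the converse there must assume it.

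The step I expect to be the main obstacle is the reflexivity assertion inside (iii): converting the single diameter hypothesis into norm-attainment of \emph{every} functional, so that James' theorem becomes applicable, requires choosing the hyperplanes $f^{-1}(1)$ and controlling their minimizing sequences uniformly, and James' theorem itself is the deep external input. The dual form of \v{S}mulian's Fr\'{e}chet lemma used in (ii) and (vi) --- in particular the passage from nearly-norming sequences in $X^{**}$ to ones in the predual via Goldstine's theorem --- is the other delicate point; by contrast the moduli duality in (v) and the $w$-versus-$w^*$ bookkeeping in (iv) and (vii) are technically routine once Definition \ref{defi} and the directional estimates are in hand.
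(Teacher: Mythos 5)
The paper does not prove this proposition at all: it appears in Section~2 as background, stated as a compilation of classical facts and cited wholesale to \cite[page 425--516]{Meg}, so the only meaningful benchmark is the standard textbook treatment --- which is essentially what you have reconstructed. Your outline is correct and uses exactly the ingredients of that treatment: the support-mapping duality for (i), the directional \v{S}mulian estimates for (iv) and (vii), the Lindenstrauss moduli formula for (v), the minimizing-sequence argument plus James' theorem for (iii), and the predual form of \v{S}mulian's Fr\'{e}chet lemma for (ii) and (vi). Two small glosses, neither affecting correctness: in (v), ``exchanging $X$ and $X^*$'' literally applied to the stated formula compares $\delta_{X^*}$ with $\rho_{X^{**}}$, so you need either the companion formula $\rho_X(\tau)=\sup\{\tau\eps/2-\delta_{X^*}(\eps):0\le\eps\le 2\}$ or the identity $\rho_{X^{**}}=\rho_X$; and the delicate direction of the predual \v{S}mulian lemma does not actually require Goldstine's theorem --- its contrapositive is proved by picking nearly-norming points in $B_X$ for $f\pm h_n$ and interleaving, entirely inside the predual ball.
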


In the following section, we will consider a generalization of Godefroy-Kalton theorem which says that if there exists an isometry from a separable Banach space $X$ into $Y$, then there is a linear isometry from $X$ into $Y$. Indeed, we show that if there is an $\eps$-isometry from a separable
Banach space $X$ into a Banach space $Y$ containing no $\ell_1$,
then there exists a linear isometry from $X$ into $Y^{**}$. That is Theorem \ref{the}, which will be used to prove the main results in section 4.

\section{$\varepsilon$-Isometric embedding into Banach spaces containing no $\ell_1$}

In 2003, Godefroy and Kalton \cite{Gode} studied the relationship
between isometry and linear isometry, and showed the following deep
theorem:
\begin{theorem}(Godefroy-Kalton). \label{GK}
Suppose that $X,Y$ are two Banach spaces. If $X$ is separable and there is an isometry $f: X\rightarrow
Y$, then $Y$ contains an isometric linear copy of $X$;
\end{theorem}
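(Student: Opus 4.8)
The plan is to linearize the isometry by a dilation-limit construction in the bidual, and then to descend from the bidual to $Y$ itself by passing through Lipschitz-free spaces, where separability does the essential work. First I would normalize: replacing $f$ by $f-f(0)$ changes nothing for an isometry, so I may assume $f(0)=0$, and since $f(X)$ is separable I may replace $Y$ by the separable subspace $Y_0=\overline{\mathrm{span}}\,f(X)$; it suffices to produce a linear isometry $X\to Y_0$. The candidate is the map obtained by linearizing $f$ near the origin, namely $U(x)=w^*\text{-}\lim_{t\to0^+}\tfrac1t f(tx)$, the limit taken along a fixed free ultrafilter inside $Y_0^{**}$. The net $\tfrac1t f(tx)$ is bounded (indeed $\|\tfrac1t f(tx)\|=\tfrac1t\|f(tx)-f(0)\|=\|x\|$), so the weak$^*$ limit exists by Banach--Alaoglu, and $U$ is positively homogeneous with $\|U(x)\|\le\|x\|$.

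The first, accessible half is that $U$ is a linear isometric embedding of $X$ into $Y_0^{**}$, and here I would lean on Theorem \ref{mainlemma} with $\varepsilon=0$: for each $x^*\in X^*$ there is $\phi=\phi(x^*)\in Y_0^*$ with $\|\phi\|=\|x^*\|$ and $\langle\phi,f(z)\rangle=\langle x^*,z\rangle$ for all $z\in X$. Pairing $\phi(x^*)$ against $\tfrac1t f(tx)$ gives $\langle\phi(x^*),\tfrac1t f(tx)\rangle=\langle x^*,x\rangle$, and since $\phi(x^*)$ acts weak$^*$-continuously on $Y_0^{**}$ we obtain $\langle\phi(x^*),U(x)\rangle=\langle x^*,x\rangle$ for every $x$. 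As $x\mapsto\langle x^*,x\rangle$ is linear for each $x^*$ and these functionals norm $U(X)$ (so that $\|U(x)\|=\sup_{\|x^*\|\le1}\langle x^*,x\rangle=\|x\|$), it follows that $U$ is linear and isometric into $Y_0^{**}$; this is exactly the weaker conclusion available without separability.

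The deep half is showing $U(X)\subset Y_0$ rather than merely $Y_0^{**}$, and this is where I would invoke the Lipschitz-free space machinery. Write $\mathcal{F}(Z)$ for the free space of $Z$, with Dirac embedding $\delta_Z$, and recall that every $1$-Lipschitz $g\colon Z\to W$ fixing $0$ linearizes to $\widehat{g}\colon\mathcal{F}(Z)\to W$ of norm $\mathrm{Lip}(g)$ with $\widehat g\,\delta_Z=g$; in particular the barycenter map $\beta_Z=\widehat{\mathrm{id}_Z}\colon\mathcal{F}(Z)\to Z$ is a norm-one linear quotient, and $\widehat{f}\colon\mathcal{F}(X)\to\mathcal{F}(Y_0)$ is a linear isometric embedding because $f$ has a $1$-Lipschitz inverse on its range. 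The crux is the lifting theorem for separable spaces: the quotient $\beta_X\colon\mathcal{F}(X)\to X$ admits a linear isometric right inverse $\iota\colon X\to\mathcal{F}(X)$ with $\beta_X\circ\iota=\mathrm{id}_X$. The natural construction mirrors the one above,
\[
\iota(x)=w^*\text{-}\lim_{t\to0^+}\tfrac1t\,\delta_{tx}\in\mathcal{F}(X)^{**},\qquad \beta_X\big(\tfrac1t\delta_{tx}\big)=x,\quad \big\|\tfrac1t\delta_{tx}\big\|=\|x\|,
\]
so that $\beta_{Y_0}\circ\widehat{f}\circ\iota$ reproduces $U$; hence $\iota(x)\in\mathcal{F}(X)$ would force $U(x)\in Y_0$.

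The main obstacle is precisely the analysis of $\iota$, and it is genuinely two obstacles. First, a priori $\iota$ is only positively homogeneous: for a fixed non-differentiable $\varphi\in\mathrm{Lip}_0(X)$ the ultralimit $\lim_t\tfrac1t\varphi(tx)$ need not be additive in $x$, so linearity must be manufactured by a careful choice of ultrafilter together with an averaging argument that uses the separability of $X$ (equivalently of $\mathcal{F}(X)$, which makes bounded subsets of $\mathrm{Lip}_0(X)=\mathcal{F}(X)^*$ weak$^*$-metrizable and permits a diagonalization). Second, and more seriously, one must show the limit lands in $\mathcal{F}(X)$ and not merely in its bidual; this again exploits separability through a weak$^*$-closure argument controlling the cluster point. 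Once the isometric linear section $\iota$ is secured, the composition $U=\beta_{Y_0}\circ\widehat{f}\circ\iota$ is linear, maps into $Y_0\subset Y$, and is isometric by the second paragraph, yielding the isometric linear copy of $X$ inside $Y$. I expect essentially all the difficulty to be concentrated in this separable lifting theorem; the free-space formalism and the final composition are then routine.
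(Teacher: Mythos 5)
This theorem is quoted in the paper from Godefroy--Kalton \cite{Gode} without proof, so your attempt has to stand on its own, and it does not: there are two genuine gaps. The first is the claim in your second paragraph that the dilation limit $U(x)=w^*\text{-}\lim_{t\to 0^+}\tfrac{1}{t}f(tx)$ is \emph{linear} because the functionals $\phi(x^*)$ satisfy $\langle \phi(x^*),U(x)\rangle=\langle x^*,x\rangle$ and norm $U(X)$. That inference is invalid: these identities say only that the additivity defect $U(x+y)-U(x)-U(y)$ is annihilated by the subspace $\Lambda=\{\phi\in Y_0^*:\phi\circ f\ \text{is linear}\}$, and $\Lambda$ need not separate the points of $\mathrm{span}\,U(X)$, so the defect need not vanish. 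Concretely, take $X=\R$, $Y=\ell_\infty^2$, $f(x)=(x,|x|)$. This is an isometry with $f(0)=0$; since $f$ is positively homogeneous, $\tfrac{1}{t}f(tx)=f(x)$ for every $t>0$, so $U=f$, which is not additive ($U(1)+U(-1)=(0,2)\neq 0$). Here $\Lambda=\{(a,0):a\in\R\}$, every identity you verified holds, and yet $U$ is nonlinear. This is exactly why the paper's own Theorem \ref{the}, which uses the same dilation construction, claims only a (possibly nonlinear) isometry into $Y^{**}$, and why Corollary \ref{cor} must then invoke Theorem \ref{GK} itself to upgrade it to a linear one.

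The second gap is that you reduce the theorem to the statement that, for separable $X$, the barycenter map $\beta_X:\mathcal{F}(X)\to X$ admits a linear isometric section $\iota$, and you explicitly leave this unproved (``a careful choice of ultrafilter together with an averaging argument''). But that lifting theorem \emph{is} the Godefroy--Kalton theorem: it is where all the work and all the use of separability lie, and the counterexample above shows the naive dilation limit cannot serve as $\iota$, so the needed ``averaging'' is not a technicality one can wave at. Moreover, even granting $\iota$, your justification that $T=\beta_{Y_0}\circ\widehat{f}\circ\iota$ is isometric (``it reproduces $U$'') is circular: once $\iota$ is a corrected, averaged lifting rather than the dilation limit, $T$ need not coincide with $U$, and isometricity of $T$ requires a separate argument --- for instance Figiel's theorem, stated in the paper's introduction: taking $P:\overline{\mathrm{span}}\,f(X)\to X$ of norm one with $P\circ f=\mathrm{id}_X$, one checks $P\circ\widehat{f}=\beta_X$ on Dirac elements and hence everywhere, so $\|x\|=\|P\widehat{f}\iota(x)\|\le\|\widehat{f}\iota(x)\|\le\|\iota(x)\|=\|x\|$. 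In short, your architecture (free-space functoriality plus a separable lifting) is indeed Godefroy and Kalton's route, but the proposal proves neither the linearity you claim in the bidual nor the lifting theorem that carries the entire weight of the result.
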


In this section, we will raise an open Problem \ref{problem} and
give another positive example (i.e., Corollary \ref{cor}) for this
problem by using the Rosenthal's $\ell_1$ theorem and Theorem
\ref{mainlemma}.

\begin{problem}\label{problem}Let $f$ be an
$\varepsilon$-isometry from $X$ into $Y$. Does there exist an
isometry from $X$ into $Y$?
\end{problem}

\begin{theorem} \label{the}Let $X$ be a separable Banach space, and let $Y$ be a Banach space such that no closed subspace of $Y$ is isomorphic to $\ell_1$.  If $\varepsilon> 0$, $f$ is an
$\varepsilon$-isometry from $X$ into $Y$, then there is an isometry
from $X$ into $Y^{**}$.
\end{theorem}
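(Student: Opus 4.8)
The plan is to manufacture the desired isometry as a weak-star limit in $Y^{**}$ of suitably rescaled copies of $f$, using the absence of $\ell_1$ to force the limit to exist and Theorem \ref{mainlemma} to recover the exact norms. First I would normalize by replacing $f$ with $f-f(0)$, which preserves the $\varepsilon$-isometry property and gives $f(0)=0$. For each $n\in\N$ set $f_n(x)=f(nx)/n$. A direct computation shows $f_n$ is an $(\varepsilon/n)$-isometry with $f_n(0)=0$ and $\|f_n(x)\|\le\|x\|+\varepsilon/n$, so for each fixed $x$ the sequence $\{f_n(x)\}_n$ is bounded in $Y$, while $f_n$ becomes asymptotically isometric as $n\to\infty$.

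Next I would exploit the separability of $X$ together with the hypothesis on $Y$. Fix a countable dense set $\{x_j\}_{j\ge 1}\subset X$. Since no closed subspace of $Y$ is isomorphic to $\ell_1$, Rosenthal's $\ell_1$ theorem guarantees that every bounded sequence in $Y$ admits a weakly Cauchy subsequence, and any weakly Cauchy sequence in $Y$ converges in the weak-star topology of $Y^{**}$ to an element of $Y^{**}$ (its limit functional on $Y^*$). A diagonal argument over $j$ then yields a single subsequence $(n_k)$ for which $U(x_j):=w^*\text{-}\lim_k f_{n_k}(x_j)$ exists in $Y^{**}$ for every $j$.

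The heart of the matter is to show that $U$ is an isometry on $\{x_j\}$. The upper estimate comes for free: weak-star lower semicontinuity of the norm gives $\|U(x_i)-U(x_j)\|\le\liminf_k\|f_{n_k}(x_i)-f_{n_k}(x_j)\|=\|x_i-x_j\|$, the limit holding because each $f_{n_k}$ is an $(\varepsilon/n_k)$-isometry. For the matching lower estimate I would invoke Theorem \ref{mainlemma}. Given $i,j$, pick by Hahn-Banach a functional $x^*\in S_{X^*}$ with $\langle x^*,x_i-x_j\rangle=\|x_i-x_j\|$, and let $\phi\in S_{Y^*}$ be the functional it produces, so $|\langle\phi,f(x)\rangle-\langle x^*,x\rangle|\le 4\varepsilon$ for all $x$. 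Replacing $x$ by $n_k x$ and dividing by $n_k$ gives $|\langle\phi,f_{n_k}(x)\rangle-\langle x^*,x\rangle|\le 4\varepsilon/n_k\to 0$, whence $\langle U(x_i),\phi\rangle=\langle x^*,x_i\rangle$ and likewise for $x_j$. Therefore $\|U(x_i)-U(x_j)\|\ge\langle U(x_i)-U(x_j),\phi\rangle=\langle x^*,x_i-x_j\rangle=\|x_i-x_j\|$. Thus $U$ is an isometry on the dense set $\{x_j\}$, and since $Y^{**}$ is complete it extends uniquely to an isometry $X\to Y^{**}$, which is what the theorem claims (and, combined with the Godefroy-Kalton Theorem \ref{GK}, even gives an isometric \emph{linear} copy of $X$ in $Y^{**}$).

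I expect the main obstacle to lie in the coordination carried out in the third paragraph: the subsequence $(n_k)$ must be chosen \emph{before} the norming functionals are selected, so one has to check that the two independent limits, namely the weak-star limit defining $U(x_i)$ and the scaling limit supplied by Theorem \ref{mainlemma}, are compatible once both are tested against the single fixed $\phi$. The no-$\ell_1$ hypothesis is used precisely to make the weak-star limits exist, whereas Theorem \ref{mainlemma} is the ingredient that upgrades mere lower semicontinuity into exact norm preservation; neither alone suffices.
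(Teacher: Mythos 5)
Your proposal is correct and follows essentially the same route as the paper's own proof: Rosenthal's $\ell_1$ theorem plus a diagonal argument over a countable dense set to define $U$ as a $w^*$-limit in $Y^{**}$, the Cheng--Dong--Zhang theorem combined with Hahn--Banach norming functionals for the lower estimate, $w^*$-lower semicontinuity of the norm for the upper estimate, and extension by density. Your explicit normalization $f-f(0)$ is a small point of added rigor that the paper leaves implicit, but otherwise the two arguments coincide.
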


\begin{proof} Given $x\in X$, by the Rosenthal's $\ell_1$ theorem
(see \cite{Ros}, \cite[Theorem 10.2.1]{Alb} or \cite[ Theorem 5.37]{Fan}), there exists a weakly Cauchy subsequence
$\left\{\frac{f(n_kx)}{n_k}\right\}_{k=1}^\infty$ of
$\left\{\frac{f(nx)}{n}\right\}_{n=1}^\infty$. Since $Y^{**}$ is $w^*$-semi-complete (Indeed, for every $w^*$-Cauchy sequence $\{y_n^{**}\}_{n=1}^{\infty}$ of $Y^{**}$, let $y^{**}\in Y^{*'}$ (i.e., the Algebraic dual of $Y^*$) be defined for each $y^* \in Y^*$ by $ y^{**}(y^*)=\lim y_n^{**}(y^*)$. So by the uniform boundedness principle $y^{**} \in Y^{**}$), it follows that $\left\{\frac{f(n_kx)}{n_k}\right\}_{k=1}^\infty$ is $w^*$-convergent in $Y^{**}$ (A subset $A$ of a locally convex space is semi-complete if every Cauchy sequence contained in $A$ has a limit in $A$). Let $\{x_m\}_{m=1}^\infty$ be a norm-dense sequence of
$X$.
Then for each $m\in\N$ there is a weakly Cauchy subsequence $$\left\{\frac{f(n^{(m)}_kx_m)}{n^{(m)}_k}\right\}_{k=1}^\infty$$ of $\left\{\frac{f(nx_m)}{n}\right\}_{n=1}^\infty$, and we can inductively choose $\{n_k^{(m)}\}_{k=1}^{\infty}$ such that $\{n_k^{(m+1)}\}_{k=1}^{\infty}$ $\subset$ $\{n_k^{(m)}\}_{k=1}^{\infty}$.
 By a standard diagonal argument,
 we deduce that $$\left\{\frac{f(n^{(k)}_kx_m)}{n^{(k)}_k}\right\}_{k=1}^\infty$$ is also a weakly Cauchy sequence for all $m\in \N$.
 It follows that

$$U(x_m)\equiv w^*-\lim_k \frac{f(n^{(k)}_kx_m)}{n^{(k)}_k} \text{\;\;exists \;for\; all\;} m\in \N.$$
By Theorem \ref{mainlemma}, for each $x^*\in S_{X^*}$, there is a
functional $\phi \in S_{Y^*}$ such that

$$|\langle \phi,f(x)\rangle-\langle
x^*,x\rangle|\leq 4\varepsilon,\;\text{\;for\;all} \;x\in X.$$
Hence

$$\left|\left\langle \phi,\frac{f(n_kx_m)}{n_k}\right\rangle-\langle
x^*,x_m\rangle\right|\leq
\frac{4\varepsilon}{n_k},\;\text{\;for\;all\;}m,\;k\in\N.$$
Letting $k$ tend to
$\infty$, we have

\begin{align}\label{inequ0}\langle \phi, U(x_m)\rangle=\langle
x^*,x_m\rangle,\text{\;\;for\;all} \;m\in \N.\end{align}
 Given $m$, $n \in \N$, by
the Hahn-Banach theorem we can choose a norm-attaining functional
$x^*\in S_{X^*}$ such that

$$\langle x^*,x_m-x_n\rangle=\|x_m-x_n\|.$$
Thus
\begin{align}\label{inequ}
\|x_m-x_n\|&=\langle \phi, U(x_m)\rangle-\langle \phi, U(x_n)\rangle\nonumber\\
&\leq \|U(x_m)-U(x_n)\|.
\end{align}
On the other hand, by the $w^*$-lower semicontinuous argument of a
conjugate norm, we deduce that for every $m$, $n\in\N$
\begin{align}\label{inequ1}
\|U(x_m)-U(x_n)\|&=\left\|w^*-\lim_k\frac{f(n_kx_m)-f(n_kx_n)}{n_k}\right\|\nonumber\\
&\leq \liminf_k \frac{\|n_kx_m-n_kx_n\|+\eps}{n_k}\nonumber\\
&=\|x_m-x_n\|.
\end{align}
Therefore, it follows from (\ref{inequ}) and (\ref
{inequ1}) that $U$ is an isometry from the norm-dense sequence
$(x_m)_{m=1}^\infty$ into $Y^{**}$. Hence $U$ has a unique
extension $\overline{U}: X\rightarrow Y^{**}$ such that
$\overline{U}$ is also an isometry from $X$ into $Y^{**}$.
\end{proof}

\begin{corollary}\label{cor} Let $X$ be a separable Banach space, and let $Y$ be the James space ${\mathcal J}$ or a reflexive space. If $f$ is an
$\varepsilon$-isometry from $X$ into $Y$, then there is a
linear isometry from $X$ into $Y$.
\end{corollary}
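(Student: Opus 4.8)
The plan is to feed the output of Theorem~\ref{the} into the Godefroy--Kalton theorem (Theorem~\ref{GK}), after first checking that in both cases the hypotheses of Theorem~\ref{the} are met and that the bidual $Y^{**}$ can be replaced by $Y$ itself. The first thing I would verify is that neither class of target spaces contains an isomorphic copy of $\ell_1$. For reflexive $Y$ this is immediate, since closed subspaces of a reflexive space are reflexive while $\ell_1$ is not. For the James space $\mathcal J$ I would recall that $\mathcal J^{*}$ is separable; since any Banach space containing an isomorphic copy of $\ell_1$ has $\ell_\infty$ as a quotient of its dual and therefore a nonseparable dual, $\mathcal J$ can contain no copy of $\ell_1$. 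Hence Theorem~\ref{the} applies and yields an isometry (a priori nonlinear) $g\colon X\rightarrow Y^{**}$.

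The second step is to descend from $Y^{**}$ back to $Y$. If $Y$ is reflexive then the canonical embedding identifies $Y^{**}$ isometrically with $Y$, so $g$ is already an isometry of $X$ into $Y$. If $Y=\mathcal J$, I would invoke James's classical result that, under its natural norm, $\mathcal J$ is isometrically isomorphic to its bidual $\mathcal J^{**}$ (even though the canonical image has codimension one, so $\mathcal J$ is not reflexive). Composing $g$ with such an isometric isomorphism $\mathcal J^{**}\rightarrow\mathcal J$ again produces an isometry of $X$ into $Y$. Thus in either case there exists an isometry from $X$ into $Y$.

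Finally, since $X$ is separable and we now have an isometry $X\rightarrow Y$, the Godefroy--Kalton theorem (Theorem~\ref{GK}) guarantees that $Y$ contains an isometric linear copy of $X$, i.e.\ there is a linear isometry from $X$ into $Y$, which is the desired conclusion. The one genuinely nontrivial input, and the step I would expect to be the main obstacle, is the structural fact that the James space is isometric to its second dual; this is precisely what lets the argument survive the loss of reflexivity when $Y=\mathcal J$, whereas for reflexive $Y$ the passage from $Y^{**}$ to $Y$ is automatic. Everything else is a matter of assembling Theorem~\ref{the} and Theorem~\ref{GK}.
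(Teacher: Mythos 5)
Your proof is correct and follows essentially the same route as the paper: apply Theorem~\ref{the} to get a (nonlinear) isometry into $Y^{**}$, identify $Y^{**}$ with $Y$ (canonically in the reflexive case, via James's isometry $\mathcal J\cong\mathcal J^{**}$ otherwise), and finish with the Godefroy--Kalton theorem (Theorem~\ref{GK}). The only cosmetic difference is how the no-copy-of-$\ell_1$ hypothesis is checked: you use elementary arguments (subspaces of reflexive spaces are reflexive; a space containing $\ell_1$ has nonseparable dual via the quotient onto $\ell_\infty$), while the paper invokes the fact that Asplund spaces contain no $\ell_1$.
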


\begin{proof} Note that ${\mathcal J}$ is isometric to its
bidual ${\mathcal J^{**}}$ admitting a separable dual but fails to
be reflexive, nowadays known as the James space constructed by James
in \cite{Jam} and \cite{Jam1}(also see \cite[ page 205]{Fan}). By Theorem \ref{GK} and
Theorem \ref{the}, we can easily complete the proof. We would like to emphasize here that an Asplund space (i.e., a space whose dual has the Radon Nikod\'ym property, see \cite[Theorem 5.7, page 82]{Phe}) contains no $\ell_1$, for example, a relexive space or a Banach space with a separable dual.
\end{proof}

\section{Rotundity and smoothness of Banach spaces under the perturbation by
$\varepsilon$-isometries.}

In this section, we consider a set valued mapping version of Problem
\ref{probl} which is equivalent to the following problem and then apply the Cheng-Dong-Zhang theorem to the studies of rotundity and smoothness of Banach spaces under the perturbation by $\varepsilon$-isometries.

\begin{problem}\label{prob}Does there exist a constant $\gamma>0$ depending
 only on $X$ and $Y$ with the following property: For each $\varepsilon$-isometry $f:$ $X\rightarrow Y$ with
$f(0)=0$ there is a $w^*-w^*$ continuous linear selection $Q$ of the
set-valued mapping $\Phi$ from $X^*$ into $2^{L(f)^*}$ defined by
$$\Phi(x^*)\coloneqq \{\phi\in L(f)^*:|\langle
\phi,f(x)\rangle-\langle x^*,x\rangle|\leq\gamma\|x^*\|\varepsilon
,\;\;\text{for\;all}\;x\in X \},$$ where $L(f)$ $=$ $\overline{\text{span}}f(X)$?
\end{problem}

 Now, we present the following set valued mapping versions associated with Problem \ref{probl}
(Problem \ref{prob}), that is, Lemma \ref{main1}, which is very helpful for the proof of our main results.

\begin{lemma}\label{main1}
Suppose that $X$, $Y$ are Banach spaces, $\varepsilon\geq 0$, $r>0$
and $\gamma \geq 4$. Assume that $f$ is a $\varepsilon-$ isometry
from $X$ into $Y$ with $f(0)=0$ and let $\Phi$ be as in Problem \ref {prob}. If  we define a set-valued
mapping $\Phi_r:rB_{X^*}\rightarrow 2^{L(f)^*}$ by
$$\Phi_r(x^*)\coloneqq \{\phi\in rB_{L(f)^*}:|\langle
\phi,f(x)\rangle-\langle x^*,x\rangle|\leq \gamma \|x^*\|\varepsilon
,\;\;\text{for\;all}\;x\in X \},$$ where $L(f)$ $=$ $\overline{\rm{span}}f(X)$. Then

(i) $\Phi_r$ is convex $w^*$-usco at
each point of $rS_{X^*}$.

(ii) There exists a minimal convex norm-$w^*$ usco mapping contained in
$\Phi$.

(iii) If, in addition, $Y$ is separable, then there exists a selection $Q$ of $\Phi$ such that $Q$ is norm-$w^*$ continuous on a norm dense $G_\delta$ set of $X^*$.
\end{lemma}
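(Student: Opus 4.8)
The plan is to treat Theorem \ref{mainlemma} as the one analytic input that keeps the value sets nonempty, and then to run three increasingly refined topological arguments; throughout, ``usco'' means norm-to-$w^*$ upper semicontinuity with nonempty $w^*$-compact values. I would first record nonemptiness: given $x^*\in rB_{X^*}$, the functional $\phi\in Y^*$ produced by Theorem \ref{mainlemma} has $\|\phi\|=\|x^*\|\le r$ and satisfies the estimate with error $4\varepsilon\|x^*\|\le\gamma\varepsilon\|x^*\|$ (here $\gamma\ge4$), so its restriction to $L(f)$ lies in $\Phi_r(x^*)$. For (i), fix $x_0^*\in rS_{X^*}$. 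Convexity of $\Phi_r(x_0^*)$ is immediate, since it is the intersection of the convex ball $rB_{L(f)^*}$ with the affine slabs $\{\phi:|\langle\phi,f(x)\rangle-\langle x_0^*,x\rangle|\le\gamma r\varepsilon\}$; $w^*$-compactness follows because $\phi\mapsto\langle\phi,f(x)\rangle$ is $w^*$-continuous (so each slab is $w^*$-closed) and $rB_{L(f)^*}$ is $w^*$-compact by Banach--Alaoglu. Upper semicontinuity I would prove by contradiction with nets: if it fails there are a $w^*$-open $V\supseteq\Phi_r(x_0^*)$, a net $x_\alpha^*\to x_0^*$ in norm, and $\phi_\alpha\in\Phi_r(x_\alpha^*)\setminus V$; a $w^*$-cluster point $\phi_0\in rB_{L(f)^*}$ lies outside the $w^*$-open $V$, yet passing to the limit in $|\langle\phi_\alpha,f(x)\rangle-\langle x_\alpha^*,x\rangle|\le\gamma\|x_\alpha^*\|\varepsilon$, together with $\|x_\alpha^*\|\to\|x_0^*\|=r$, forces $\phi_0\in\Phi_r(x_0^*)\subseteq V$, a contradiction. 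The hypothesis $x_0^*\in rS_{X^*}$ is exactly what yields the sharp limiting bound $\gamma r\varepsilon$.

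For (ii), I would first manufacture a single convex usco inside $\Phi$ on all of $X^*$ using positive homogeneity. Since $\Phi(\lambda x^*)=\lambda\Phi(x^*)$ for $\lambda>0$, set $\Psi(0)=\{0\}$ and $\Psi(x^*)=\|x^*\|\,\Phi_1\!\big(x^*/\|x^*\|\big)$ for $x^*\ne 0$; then $\Psi(x^*)\subseteq\Phi(x^*)$ with nonempty, convex, $w^*$-compact values. Away from $0$, $\Psi$ is usco as the product of the norm-continuous retraction $x^*\mapsto x^*/\|x^*\|$ composed with the usco $\Phi_1|_{S_{X^*}}$ from (i) and the continuous scalar $\|x^*\|$; at $0$ one checks directly that $\|x^*\|B_{L(f)^*}$ eventually sits inside any basic $w^*$-neighborhood of $0$. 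With one convex usco $\Psi\subseteq\Phi$ in hand, I would apply Zorn's lemma to the family of convex uscos contained in $\Psi$, ordered by reverse graph-inclusion: a chain's pointwise intersection is nonempty (finite intersection property for nested $w^*$-compacta), convex, $w^*$-compact and usco, so a minimal element $\Psi'$ exists; if a convex usco $\Psi''\subseteq\Psi'$ existed then $\Psi''\subseteq\Psi$, so minimality forces $\Psi''=\Psi'$, i.e.\ $\Psi'$ is a minimal convex usco contained in $\Phi$.

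For (iii), I would exploit separability of $Y$, hence of $L(f)$, so that bounded $w^*$-subsets of $L(f)^*$ are $w^*$-metrizable. Let $\Psi'$ be the minimal convex usco from (ii). Since $\Psi'(x^*)\subseteq\|x^*\|B_{L(f)^*}$, on each open ball $\{\|x^*\|<n\}$ it restricts to a minimal usco into the compact metric space $(nB_{L(f)^*},w^*)$. The standard generic single-valuedness theorem for minimal uscos into compact metric spaces over a Baire space (here the Banach space $X^*$) then gives a dense $G_\delta$ subset of that ball on which $\Psi'$ is single-valued and norm-$w^*$ continuous; amalgamating over $n\in\N$ produces a dense $G_\delta$ set $D\subseteq X^*$ of such points. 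Defining $Q(x^*)$ to be the unique point of $\Psi'(x^*)$ for $x^*\in D$ and an arbitrary point of $\Psi'(x^*)$ otherwise yields a selection of $\Phi$ (as $\Psi'\subseteq\Phi$) that is norm-$w^*$ continuous at each point of $D$: single-valued upper semicontinuity forces $Q(y^*)\in\Psi'(y^*)$ to be $w^*$-close to $Q(x^*)$ when $y^*$ is norm-close to $x^*\in D$.

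The main obstacle is the globalization in (iii): the natural compact metric range $nB_{L(f)^*}$ grows with the ball $\{\|x^*\|<n\}$, so I must verify both that minimality of $\Psi'$ survives restriction to each open ball and that the ball-by-ball dense $G_\delta$ sets of single-valuedness amalgamate into one dense $G_\delta$ subset of $X^*$; the Baire property of $X^*$ together with the openness of the balls makes this work, but it is the step demanding the most care. A secondary delicate point is the chain argument in (ii), where $w^*$-compactness of the values is exactly what guarantees that the intersected family still has nonempty values and remains usco.
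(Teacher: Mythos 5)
Your proposal is correct in substance and shares the paper's overall architecture: Theorem \ref{mainlemma} supplies nonemptiness of the value sets, Alaoglu plus limit-passing gives upper semicontinuity, Zorn's lemma gives minimality in (ii), and Baire category combined with $w^*$-metrizability of bounded subsets of $L(f)^*$ (via separability of $L(f)$) gives (iii). Where you genuinely diverge is (ii). The paper manufactures its global convex norm-$w^*$ usco inside $\Phi$ by annular layering: $F(0)=\{0\}$ and $F(x^*)=\Phi_{n+1}(x^*)$ for $x^*\in (n+1)B_{X^*}\setminus nB_{X^*}$, and then verifies norm-$w^*$ upper semicontinuity. That verification is delicate precisely at points with $\|x^*\|=n_0\in\N$ approached by a sequence from outside $n_0B_{X^*}$: there the approximating values lie in $(n_0+1)B_{L(f)^*}$, and the limiting functional need not satisfy the norm bound $n_0$ required for membership in $F(x^*)=\Phi_{n_0}(x^*)$ (the paper's choice of $n_0$ with both $\|x^*_n\|\le n_0$ eventually and $n_0-1<\|x^*\|\le n_0$ is not available in that case). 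Your homogeneity-based map $\Psi(x^*)=\|x^*\|\,\Phi_1(x^*/\|x^*\|)=\Phi(x^*)\cap\|x^*\|B_{L(f)^*}$ sidesteps this entirely, because the norm bound on the values varies continuously with $x^*$; this is a real advantage of your route, and the rest of your Zorn argument is the same as the paper's (which cites Phelps, Proposition 7.3).

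Two caveats, both repairable. First, in (i) you declare ``usco'' to mean norm-to-$w^*$ upper semicontinuity, but the paper's statement and proof of (i) concern $w^*$-to-$w^*$ upper semicontinuity (the paper works with nets $(x^*_\alpha)$ that are $w^*$-convergent to $x^*_0\in rS_{X^*}$). As written you prove a weaker statement than claimed; fortunately your argument upgrades verbatim, since for a net in $rB_{X^*}$ that is $w^*$-convergent to a point of norm $r$ one still has $\langle x^*_\alpha,x\rangle\to\langle x^*_0,x\rangle$, and $w^*$-lower semicontinuity of the norm together with $\|x^*_\alpha\|\le r$ forces $\|x^*_\alpha\|\to r$. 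Second, in (iii) you apply the generic single-valuedness theorem for \emph{minimal uscos} to the \emph{minimal convex usco} $\Psi'$ produced in (ii); strictly you should either pass (by Zorn again) to a minimal usco contained in $\Psi'$, or invoke the convex-usco version of the theorem, which is what the paper's citation of Phelps (Lemma 7.14) is meant to cover. With that adjustment, your treatment of (iii) --- restriction to open balls, minimality surviving such restriction, and amalgamation of the ball-by-ball dense $G_\delta$ sets --- is in fact more detailed than the paper's own proof, which essentially stops at the citation.
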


\begin{proof} (i) By the definition of $\Phi_r$ and the Cheng-Dong-Zhang theorem it is clear that $\Phi_r$ is nonempty, convex and $w^*$-compact valued. Now, we will show that
it is $w^*$-$w^*$ upper semicontinuous at each $x^*\in rS_{X^*}$. Let
$(x^*_\alpha)_{\alpha \in \Gamma}\subset rB_{X^*}$ be a net $w^*$
convergent to $x^*\in rS_{X^*}$ and $y^*_\alpha\in
\Phi_r(x^*_\alpha)$ for all $\alpha \in \Gamma$. By Alaouglu
theorem, there exists a subnet $(y^*_\beta)\subset (y^*_\alpha)$
$w^*$- convergent to some $y^*\in rB_{L(f)^*}$ such that for every $x\in X$,

$$|\langle y^*_\beta,f(x)\rangle-\langle x^*_\beta,x\rangle|\leq \gamma r \varepsilon.$$
 Hence for every $x\in X$, by taking limit with respect to $\beta$ we have

$$|\langle y^*,f(x)\rangle-\langle x^*,x\rangle|\leq \gamma r \varepsilon,$$
 which yields $y^*\in \Phi_r(x^*)$. Therefore, $\Phi_r $ is
$w^*-w^*$ upper semicontinuous at each point $x^*$ of $rS_{X^*}$ (If
not, by the definition of a usco mapping for some $w^*$-open set
$U\supset \Phi_r(x^*)$, we can find a net $(x^*_\alpha) $
$w^*$-convergent to $x^* \in rS_{X^*}$ such that for every $\alpha \in \Gamma$, there
exist $y^*_\alpha\in \Phi_r(x^*_\alpha)$ and $y^*_\alpha\notin U$. Since
$y^*_\alpha \notin U$ for all $\alpha \in \Gamma$, it is impossible that
any subnet of it $w^*$-converges to some $y^*\in\Phi_r(x^*)$).

 (ii) Let $F:X^*\rightarrow
2^{L(f)^*}$ be defined for all $x^*\in
X^*$ by
$$F(x^*)\coloneqq \{\phi\in \Phi(x^*):\|\phi\|=\|x^*\|\}.$$
Hence, by the Cheng-Dong-Zhang theorem (i.e., Theorem \ref{mainlemma}) for each $x^*\in X^*$,
$F(x^*)$ is a nonempty, convex and $w^*$-compact subset of $L(f)^*$
and $F\subset \Phi$. Thus, it suffices to show that $F$ is norm
$-w^*$ upper semicontinuous and hence by Zorn Lemma (see \cite[Proposition 7.3, page 103]{Phe}) there exists a
minimal convex norm$-w^*$ usco mapping contained in $\Phi$.

Let $\{x^*_n\}$ be a sequence convergent to
$x^*\in X^*$ in its norm- topology. By the
definition of $F$, for each $y^*_n\in F(x^*_n)$ we
have $\|y^*_n\|=\|x^*_n\|$ for all $n$. By the
$w^*$-compactness argument, there exists a subnet
$(y^*_\beta)\subset (y^*_n)$ $w^*$- convergent to some $y^*\in
L(f)^*$ and it follows that $y^*\in F(x^*)$. Therefore, by using (i) again $F$ is norm-$w^*$ upper semicontinuous at each point $x^*\in X^*$.

(iii) By (ii) there is a minimal convex norm-$w^*$ usco mapping $F'\subset F\subset\Phi$, and note that $X^*$ is a Baire space  and there exists a norm-dense countable set $\{x_n\}_{n=1}^{\infty} \subset S_{L(f)}$ such that the relative $w^*$-topology on every bounded subset $A$ of $L(f)^*$ coincides with a metric defined for all $x^*$, $y^*\in X^*$ by $$d(x^*,y^*)=\sum_{n=1}^{\infty}2^{-n}|\langle x^*-y^*,x_n\rangle|,$$
which follows easily from \cite[Lemma 7.14, page 106-107]{Phe}.
\end{proof}

Combining Lemma \ref{main1}, Theorem \ref{the} with the Cheng-Dong-Zhang theorem, we have the following two propositions about
rotundity and smoothness of Banach spaces under the perturbation by
$\varepsilon$-isometries. Then our results cover some classical conclusion if we come to the special case that $f$ is the identity and $X=Y$.

\begin{proposition} \label{propo}Suppose that $X$, $Y$ are Banach spaces, $\varepsilon\geq 0$, and let $f$ be
an $\varepsilon$-isometry from $X$ into $Y$ with $f(0)=0$. Let
$\Phi_1$ be  as in Lemma \ref{main1}. Then

(i) $X$ is smooth if there is a norm-$w^*$ continuous selection of
$\Phi_1\circ\partial\|\cdot\| :X\rightarrow 2^{L(f)^*}$.

(ii) In particular, if $Y^*$ is rotund, then $X^*$ is also rotund. Hence $X$ is smooth.
\end{proposition}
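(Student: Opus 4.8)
The plan is to prove the two assertions of Proposition \ref{propo} in order, relying on the structural facts about $\Phi_1$ established in Lemma \ref{main1} and the duality characterizations collected in Proposition \ref{proposition}.

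For part (i), I would start from the characterization of smoothness in Proposition \ref{prop1}: $X$ is smooth if and only if $\partial\|\cdot\|$ is single-valued at each point of $S_X$, equivalently there is a norm-$w^*$ continuous selection of the subdifferential mapping $\partial\|\cdot\|:X\rightarrow 2^{X^*}$. So the goal is to manufacture such a selection of $\partial\|\cdot\|$ out of the hypothesized norm-$w^*$ continuous selection of the composite $\Phi_1\circ\partial\|\cdot\|:X\rightarrow 2^{L(f)^*}$. The natural route is to fix a norm-$w^*$ continuous selection $S$ of $\Phi_1\circ\partial\|\cdot\|$ and then use the Cheng-Dong-Zhang theorem (Theorem \ref{mainlemma}) together with the defining inequality of $\Phi_1$ to transfer this selection back across the duality. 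Concretely, for $x^*\in\partial\|x\|$ the value $\phi=S(x)\in\Phi_1(x^*)$ satisfies $|\langle\phi,f(u)\rangle-\langle x^*,u\rangle|\le\gamma\varepsilon\|x^*\|$ for all $u\in X$; the idea is that this inequality pins down $x^*$ from $\phi$ uniquely enough that a norm-$w^*$ continuous choice of $\phi$ forces a norm-$w^*$ continuous (hence single-valued) choice of the subgradient $x^*\in\partial\|x\|$, giving smoothness via Proposition \ref{prop1}.

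For part (ii), I would argue through the subgradient structure and the rotundity of $Y^*$. First I would observe that if $Y^*$ is rotund then so is $L(f)^*$: since $L(f)$ is a subspace of $Y$, the dual $L(f)^*$ is a quotient of $Y^*$, and rotundity is inherited appropriately in this direction (or one argues directly on the relevant unit vectors). The aim is then to show $X^*$ is rotund, for which I would take two norm-one functionals $x_1^*,x_2^*\in S_{X^*}$ with $\|\tfrac12(x_1^*+x_2^*)\|=1$ and derive $x_1^*=x_2^*$. Using Theorem \ref{mainlemma} I associate to each $x_i^*$ a functional $\phi_i\in S_{Y^*}$ (or in $S_{L(f)^*}$) with $\|\phi_i\|=\|x_i^*\|=1$ and $|\langle\phi_i,f(x)\rangle-\langle x_i^*,x\rangle|\le4\varepsilon$. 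The strategy is to show that the hypothesis $\|\tfrac12(x_1^*+x_2^*)\|=1$ propagates to $\|\tfrac12(\phi_1+\phi_2)\|=1$, so that rotundity of $L(f)^*$ forces $\phi_1=\phi_2$; feeding $\phi_1=\phi_2$ back through the two approximate-duality inequalities then yields $\langle x_1^*-x_2^*,x\rangle$ arbitrarily small relative to scaling, and the usual homogeneity/dilation trick $x\mapsto nx$ with $\varepsilon/n\to0$ forces $x_1^*=x_2^*$. Once $X^*$ is rotund, the final clause ``$X$ is smooth'' is immediate from Proposition \ref{proposition}(i), which states that $X$ is smooth whenever $X^*$ is rotund.

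The main obstacle I anticipate is the transfer step where the linear-isometry-like inequality from Theorem \ref{mainlemma} must be converted into an exact rotundity/smoothness conclusion despite the $4\varepsilon$ slack. The inequalities are only approximate, so neither $\phi_i$ is literally dual to $x_i^*$, and one cannot directly quote the pointwise uniqueness of subgradients. The clean way around this is the scaling argument: replacing $x$ by $nx$ and dividing by $n$ kills the additive $\varepsilon$-error in the limit, recovering exact identities such as the relation in (\ref{inequ0}) used in the proof of Theorem \ref{the}. Making that limiting passage interact correctly with the weak-star convergence of the associated functionals, and verifying that the extremality hypothesis $\|\tfrac12(x_1^*+x_2^*)\|=1$ is genuinely inherited by the $\phi_i$ (rather than merely bounded above by $1$), is where the care is needed; the reverse bound $\|\tfrac12(\phi_1+\phi_2)\|\ge1$ should come from testing against a well-chosen norming element and again letting the scaling parameter tend to infinity.
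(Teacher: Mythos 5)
Your part (i) is essentially the paper's own argument: the defining inequality of $\Phi_1$ plus the homogeneity trick (replace $x$ by $nx$ so the additive $\gamma\varepsilon$ slack vanishes) shows that $\Phi_1(x_1^*)\cap\Phi_1(x_2^*)=\emptyset$ for distinct $x_1^*,x_2^*\in S_{X^*}$, i.e.\ every selection of $\Phi_1$ is injective; hence the hypothesized norm-$w^*$ continuous selection $\phi$ of $\Phi_1\circ\partial\|\cdot\|$ pulls back to a single-valued selection $\varphi^{-1}\circ\phi$ of $\partial\|\cdot\|$, whose norm-$w^*$ continuity the paper checks by an Alaoglu subnet argument (any $w^*$-cluster point of the $x_n^*$ must coincide with $x_0^*$ by the same $2\gamma\varepsilon$-plus-scaling estimate), and Proposition \ref{prop1} finishes. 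This is exactly the ``pins down $x^*$ uniquely'' step you describe, so (i) is the same approach. For part (ii), however, you take a genuinely more direct route than the paper's main line. The paper shows that rotundity of $Y^*$ forces the convex norm-$w^*$ usco map $\Phi_1\circ\partial\|\cdot\|$ to be single-valued (a nontrivial convex combination of two distinct values would lie in $\Phi_1$ of the corresponding convex combination and have norm one, contradicting extremality of points of $S_{L(f)^*}$), so that its unique selection is automatically norm-$w^*$ continuous and (i) applies; the rotundity of $X^*$ is only noted parenthetically ``by similar reasoning.'' You instead prove $X^*$ rotund head-on --- midpoint of norm one in $S_{X^*}$ forces the midpoint of the associated $\phi_i$ to have norm one (testing against $f(x_n)$ with $\|x_n\|\to\infty$), rotundity collapses $\phi_1=\phi_2$, and the scaling trick then collapses $x_1^*=x_2^*$ --- and you conclude smoothness of $X$ from Proposition \ref{proposition}(i). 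That is valid; it is essentially the paper's parenthetical remark promoted to the main argument, and it buys independence from the usco machinery and from part (i) altogether.

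One genuine slip to repair: your primary justification that $L(f)^*$ inherits rotundity because it ``is a quotient of $Y^*$'' is not a valid general principle --- rotundity does not pass to quotients of Banach spaces. What saves the step here is that $L(f)^*\cong Y^*/L(f)^{\perp}$ is a quotient by an annihilator, so Hahn-Banach provides \emph{norm-preserving} lifts: if $\psi=\tfrac12(\psi_1+\psi_2)$ with $\psi\in S_{L(f)^*}$ and $\psi_i\in B_{L(f)^*}$, extend each $\psi_i$ to $Y^*$ without increasing its norm; the extended midpoint has norm one, rotundity of $Y^*$ makes the extensions equal, hence $\psi_1=\psi_2$. This is precisely the paper's appeal to ``the Hahn-Banach theorem'' at this point. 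Alternatively, and more cleanly, you can bypass $L(f)^*$ entirely: Theorem \ref{mainlemma} already hands you $\phi_i\in S_{Y^*}$, and your norming vectors $f(x_n)$ lie in $Y$, so the whole midpoint-plus-scaling argument can be run in $Y^*$, where rotundity is the hypothesis itself.
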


\begin{proof} (i) Assume that $\phi:X\rightarrow L(f)^*$ is a norm-$w^*$ continuous
selection of $\Phi_1\circ\partial\|\cdot\|$, that is, for every
$x\in X$, there is $x^*\in \partial\|x\|$ such that $\phi(x)\in
\Phi_1(x^*)$.  In fact, for two functionals $x_1^*$, $x_2^*\in S_X^*$ satisfying $\varphi(x_1^*)=\varphi(x_2^*)$, we have $x_1^*=x_2^*$ by triangle inequality. That is, for every $x\in X$,
$$|\langle x^*_1,x\rangle-\langle x^*_2,x\rangle|\leq|\langle \varphi(x^*_1),f(x)\rangle-\langle x^*_1,x\rangle|+|\langle \varphi(x^*_2),f(x)\rangle-\langle x^*_2,x\rangle|\leq 2\gamma \varepsilon,$$ which implies $
x^*_1-x^*_2=0$.
Since every selection $\varphi$ of  $\Phi_1$ is injective, if
$\varphi$ is a selection of $\Phi_1$, then
$\varphi^{-1}\circ\phi:X\rightarrow S_{X^*}$ is a selection of
$\partial\|\cdot\|$. Hence by Proposition \ref{prop1} it suffices to
show that $\varphi^{-1}\circ\phi$ is norm-$w^*$ continuous. Let
$\{x_n\}\subset X$ be a sequence  norm-converging to $x_0\in X$. By assumption, for
each $n \in \mathbb{N}$ there is $x^*_n\in \partial\|x_n\|$ such
that $\phi(x_n)=\varphi(x^*_n)$ and $(\varphi(x^*_n))$ $w^*$-converges to $\varphi (x^*_0)$. It remains to show that $(\varphi^{-1}\circ\phi(x_n))$ is $w^*$-convergent to $\varphi^{-1}\circ\phi(x_0)$.

On one hand, it follows from the definition of $\Phi_1$ and the Cheng-Dong-Zhang theorem that for every $x\in X$,
\begin{align}\label{inequal}
|\langle \varphi(x^*_n),f(x)\rangle-\langle x^*_n,x\rangle|&\leq \gamma \varepsilon,
\end{align}
and
$$|\langle \varphi(x^*_0),f(x)\rangle-\langle x^*_0,x\rangle|\leq \gamma \varepsilon.$$
On the other hand, for every subnet $\{x^*_\alpha\}$ of $\{x^*_n\}$, by Alaouglu theorem there exists a $w^*$-convergent subnet $\{x^*_\beta\}$ contained in $\{x^*_\alpha\}$. Since every selection of $\Phi_1$ is injective, by substituting $\beta$ for $n$ and taking limit with respect to $\beta$ in \ref{inequal} we deduce that $w^*-\lim_\beta x^*_\beta=x^*_0$.
Therefore, $\{x^*_n\}$ is $w^*$-convergent to $x^*_0$ and hence $X$ is
smooth.

(ii)By Lemma \ref{main1} $\Phi_1$ is convex $w^*$-usco at
each point of $S_{X^*}$. Note that the subdifferential mapping $\partial\|\cdot\|$ is convex norm-$w^*$ usco. Thus the compound $\Phi_1\circ\partial\|\cdot\|$ is convex norm-$w^*$ usco. By (i) it suffices to show that $\Phi_1\circ\partial\|\cdot\|$ is single valued. If $Y^*$ is rotund, then by the Hahn-Banach
theorem every point of $S_{L(f)^*}$ is an extreme point of
$B_{L(f)^*}$. Therefore, we can deduce that $\Phi_1\circ\partial\|\cdot\|$ is
single valued at each point of $S_{X^*}$. ( In fact, if for some $x\in X$ and $x_1^*,x_2^*\in \partial\|x\|$, there exist double functionals $\phi(x_1^*)$, $\phi(x_2^*)\in \Phi_1\circ\partial\|x\|$, then every convex combination $\lambda \phi(x_1^*)+(1-\lambda)\phi(x_2^*)$ $\in \Phi_1(\lambda x_1^*+(1-\lambda)x_2^*)$ for each $0<\lambda<1$, and hence $\|\lambda \phi(x_1^*)+(1-\lambda)\phi(x_2^*)\|=1$ which is a contradiction.) Hence by the conclusion of (i) $X$ is smooth (By using the similar reasoning $X^*$ is even rotund).
\end{proof}

\begin {remark} Note that the converse of (i) in Proposition \ref{propo} also holds whenever $\Phi_1$ admits a $w^*-w^*$ continuous selection (In particular, if $Y^*$ is rotund). However, we don't know whether it also holds in general case.
\end{remark}

\begin{proposition}\label{prop2}Suppose that $X$, $Y$ are Banach spaces, $\varepsilon\geq 0$, $f$ is
an $\varepsilon$-isometry from $X$ into $Y$ with $f(0)=0$. Then

(i) $X$ is rotund if $Y^*$ is smooth;

(ii) $X$ is weakly uniformly rotund if $Y^*$ is uniformly Gateaux
smooth;

(iii) $X$ is strongly rotund if $Y^*$ is Fr\'{e}chet smooth;

(iv) $X$ is Fr\'{e}chet smooth if $Y^*$ is strongly rotund;

(v) $X$ is uniformly rotund if $Y^*$ is uniformly smooth;

(vi) $X$ is uniformly smooth if $Y^*$ is uniformly rotund.

\end{proposition}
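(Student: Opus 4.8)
The plan is to manufacture two auxiliary maps out of Theorem \ref{mainlemma} and to reduce every item to the classical duality recorded in Proposition \ref{proposition}. First, for each $x^*\in X^*$ fix, via Theorem \ref{mainlemma} (i.e. a selection of the set-valued mapping in Lemma \ref{main1}), a functional $Tx^*\in Y^*$ with $\|Tx^*\|=\|x^*\|$ and $|\langle Tx^*,f(x)\rangle-\langle x^*,x\rangle|\le 4\varepsilon\|x^*\|$ for all $x$. The essential device is \emph{dilation}: replacing $x$ by $kx$ and dividing by $k$ gives $|\langle Tx^*,f(kx)/k\rangle-\langle x^*,x\rangle|\le 4\varepsilon\|x^*\|/k\to0$, so the additive error washes out. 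Since $\|f(kx)/k\|\to\|x\|$, letting $k\to\infty$ yields, for every finite combination, $\|\sum_i\lambda_i x_i^*\|\le\|\sum_i\lambda_i Tx_i^*\|$; thus $T\colon X^*\to Y^*$ is norm-preserving and linearly expansive, in particular $\|x^*-y^*\|\le\|Tx^*-Ty^*\|$ and $\|\tfrac12(x^*+y^*)\|\le\|\tfrac12(Tx^*+Ty^*)\|$. Second, fixing a free ultrafilter $\mathcal U$ on $\N$ and using $w^*$-compactness of balls in $Y^{**}$, define $S(x)=w^*\text{-}\lim_{\mathcal U}f(kx)/k\in Y^{**}$. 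The same dilation identity gives $\langle S(x),Tx^*\rangle=\langle x^*,x\rangle$ for all $x,x^*$, and choosing $x^*$ to norm $x-x'$ together with $w^*$-lower semicontinuity of the norm shows $S$ is an isometry with $S(0)=0$ (a choice-free substitute for Theorem \ref{the} that needs neither separability of $X$ nor the absence of $\ell_1$ in $Y$).

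For the four items whose conclusion about $X$ is a rotundity property, namely (i), (ii), (iii), (v), I would argue through $S$. Given $x_n,y_n\in S_X$ with $\|\tfrac12(x_n+y_n)\|\to1$, pick $z_n^*\in S_{X^*}$ norming the midpoint; then $\langle Tz_n^*,S(x_n)\rangle=\langle z_n^*,x_n\rangle\to1$ and likewise for $y_n$, so $\|\tfrac12(S(x_n)+S(y_n))\|_{Y^{**}}\to1$ while $S(x_n),S(y_n)\in S_{Y^{**}}$. Now invoke the property of $Y^{**}$ read off from the hypothesis on $Y^*$ through Proposition \ref{proposition}: for (v), $Y^*$ uniformly smooth means $Y^{**}$ uniformly rotund, forcing $\|S(x_n)-S(y_n)\|\to0$, hence $\|x_n-y_n\|\to0$; for (ii), $Y^*$ uniformly Gateaux smooth means $Y^{**}$ is $w^*$-uniformly rotund (Proposition \ref{proposition}(vii)), so $S(x_n)-S(y_n)\to0$ in $\sigma(Y^{**},Y^*)$, and pairing with $Tx^*$ gives $\langle x^*,x_n-y_n\rangle\to0$, i.e. weak uniform rotundity of $X$; for (iii), $Y^*$ Fréchet smooth makes $Y$ strongly rotund (hence reflexive, so $Y^{**}=Y$ is strongly rotund), and with $y_n\equiv x$ held fixed the midpoint criterion yields $S(x_n)\to S(x)$, so $x_n\to x$. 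Item (i) is the same idea without limits: if $\|\tfrac12(x+y)\|=1$, then $S(x)$ and $S(y)$ are both norming functionals in $Y^{**}$ for a single $\phi=Tz^*\in S_{Y^*}$, and smoothness of $Y^*$ (uniqueness of the norming functional) forces $S(x)=S(y)$, whence $x=y$ by injectivity of $S$.

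For the two items whose conclusion about $X$ is a smoothness property, (iv) and (vi), the expansive map $T$ cannot transport smoothness directly, so instead I would transport \emph{rotundity} of $Y^*$ to $X^*$ and then dualize. Using $\|Tx^*\|=\|x^*\|$ with the two expansiveness inequalities above, the hypothesis carries over to $X^*$: if $Y^*$ is uniformly rotund then so is $X^*$, and if $Y^*$ is strongly rotund then so is $X^*$ (for the strongly rotund case one functional is held fixed; in both cases $\|\tfrac12(Tx_n^*+Ty_n^*)\|\to1$, the property of $Y^*$ gives $\|Tx_n^*-Ty_n^*\|\to0$, and expansiveness returns the conclusion in $X^*$). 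Then Proposition \ref{proposition}(v),(vi) finish the argument: $X^*$ uniformly rotund gives $X$ uniformly smooth, and $X^*$ strongly rotund gives $X$ Fréchet smooth.

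The crux, and the only genuine obstacle, is precisely the nonlinearity of $T$ and $S$. Being merely norm-preserving and one-sidedly expansive, these maps faithfully transmit lower estimates on norms of linear and convex combinations, which is exactly the content of rotundity-type conditions, but they cannot transmit the upper estimates that smoothness demands. The whole design is thereby forced: push rotundity-type conclusions through $S$ into $Y^{**}$, realize the remaining (smoothness) conclusions as rotundity of $X^*$ transported through $T$, and let the classical duality of Proposition \ref{proposition} convert between rotundity and smoothness on the $X$ side. The dilation $x\mapsto f(kx)/k$ is what makes all of this exact by annihilating the error $\varepsilon$ in the limit; the routine but essential verifications are that the ultrafilter limit $S$ is an isometry and that the midpoint normalizations survive the passage $k\to\infty$.
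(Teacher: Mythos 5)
Your transporters are constructed correctly and they give a route that is genuinely different from the paper's. The paper fixes a selection $\varphi$ of $\Phi_1$, sets $M=\overline{\rm span}\{\varphi(x^*):x^*\in S_{X^*}\}\subset Y^*$, and builds an isometry $U:X\to M^*$ which is automatically \emph{linear} (its values act linearly in $x$ on the dense set $\{\varphi(x^*)\}$ of $M$); items (iii)--(vi) are then disposed of by linear heredity: $Y$ is reflexive, $M$ and $M^*$ inherit the relevant property by Proposition \ref{proposition}, and $X$ sits inside $M^*$ as a closed linear subspace. Your $T$ and $S$ are nonlinear, so heredity is unavailable, and you instead push sequential midpoint criteria through them. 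For (i), (ii), (v), (vi) this is correct: the dilation identity, the expansiveness $\|\sum_i\lambda_i x_i^*\|\le\|\sum_i\lambda_i Tx_i^*\|$, the isometry property of $S$, and the pairing $\langle S(x),Tx^*\rangle=\langle x^*,x\rangle$ all check out, and your (ii) is actually cleaner than the paper's, which extends $U(x_n)$ from $M^*$ to $Y^{**}$ and needs a norm-preservation claim for midpoint extensions that you bypass by working in $Y^{**}$ from the start.

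However, (iii) and (iv) contain a genuine gap: you characterize strong rotundity by the midpoint criterion "with one element held fixed," i.e., $x_n,x\in S_X$ and $\|\frac{1}{2}(x_n+x)\|\to 1$ imply $x_n\to x$. That is the definition of \emph{local uniform rotundity}, not of strong rotundity, and the two notions are incomparable. Strong rotundity (equivalently reflexive $+$ rotund $+$ Radon--Riesz, Proposition \ref{proposition}(iii)) does not imply this LUR criterion (M.~A.~Smith constructed reflexive, rotund norms with the Radon--Riesz property that are not LUR), so in (iii) the step "$Y^{**}=Y$ strongly rotund and $\|\frac{1}{2}(S(x_n)+S(x))\|\to 1$ force $S(x_n)\to S(x)$" is unjustified. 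Conversely, the LUR criterion does not imply strong rotundity (by Kadec's theorem every separable space, e.g.\ $\ell_1$, has an LUR renorming, and such a space need not be reflexive), so even if that step were granted, what you would have proved about $X$ is local uniform rotundity, which does not yield the reflexivity contained in strong rotundity. The same double failure occurs in (iv), where the criterion is applied with $T$ in $Y^*$ and the conclusion is claimed for $X^*$.

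The repair inside your framework is to use the correct sequential form of strong rotundity for Banach spaces (Fan--Glicksberg full $2$-rotundity), in which the limit is taken \emph{jointly} over pairs: every sequence $(x_n)\subset B_X$ with $\lim_{m,n}\|\frac{1}{2}(x_m+x_n)\|=1$ is norm convergent. Your maps transport exactly this condition: choosing $z^*_{mn}\in S_{X^*}$ norming $\frac{1}{2}(x_m+x_n)$ gives $\|\frac{1}{2}(S(x_m)+S(x_n))\|\ge\|\frac{1}{2}(x_m+x_n)\|$, so strong rotundity of $Y=Y^{**}$ makes $(S(x_n))$ norm Cauchy and the isometry pulls this back to $(x_n)$; dually, expansiveness of $T$ handles $X^*$ in (iv). Alternatively, one can follow the paper and exploit the linearity of $U$, so that $X$ (resp.\ $X^*$) is a closed linear subspace of a space already known to have the property, and no sequential characterization is needed at all.
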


\begin{proof}(i) Let $\varphi$ be a selection of $\Phi_1$ such that for all $x\in X$ and $x^*\in
S_{X^*}$,
\begin{align}\label{inequality}
|\langle \varphi(x^*),f(x)\rangle-\langle x^*,x\rangle|&\leq
4\varepsilon.
\end{align}
And let $M$ be defined by
$$M=\overline{span}\{\varphi(x^*): x^*\in S_{X^*}\}.$$
Since for every $x\in X$, $\{\frac{f(nx)}{n}\}$ is a norm-bounded
sequence of $Y$, it follows from (\ref{inequality}) and uniform
boundedness principle that the following limit exists for every
$m\in M$,
$$\langle U(x),m\rangle =\lim_n \left\langle \frac{f(nx)}{n},m\right\rangle ,$$ where $U:X\rightarrow M^*$ is well defined for
all $x\in X$ by
$$U(x)\equiv w^*-\lim_n \frac{f(nx)}{n}.$$
By an analogous proof of Theorem \ref{the}, $U$ is an isometry from
$X$ into $M^*$ such that for each $x^*\in S_{X^*}$ and $x\in X$,
\begin{align}\label{equality}
\langle \varphi(x^*), U(x)\rangle&=\langle x^*,x\rangle.
\end{align}
Therefore, if $Y^*$ is smooth (In fact, $U$ is linear), then
$M$ as a subspace of $Y^*$ is also smooth. Hence by the above
equality (\ref{equality}) $X$ is rotund ( If not, then there are double points $x_1,x_2\in S_X$ and $x^*\in S_{X^*}$ such that $\langle x^*,x_1\rangle$ $=\langle x^*,x_2\rangle=1$. Hence by the equality \ref{equality} we have $\langle \varphi(x^*), U(x_1)\rangle =\langle \varphi(x^*), U(x_2)\rangle=1$ which is a contradiction with the smoothness of $M$).

(ii)Assume that  $Y^*$ is uniformly Gateaux smooth. Let
$\{x_n\}_{n=1}^\infty$, $\{y_n\}_{n=1}^\infty$ be two sequences of
$S_X$ such that
$$\lim_n\left\|\frac{x_n+y_n}{2}\right\|=1.$$ By definition it suffices to show
for every $x^*\in S_{X^*}$ that
\begin{align}\label{e1}
\lim_n \langle x^*,x_n-y_n\rangle=0.
\end{align}
It first follows from (i), the assumption and (vii) in Proposition
\ref{proposition} that $Y^{**}$ is $w^*$-uniformly rotund and
\begin{align}\label{e}
\lim_n\left\|\frac{U(x_n)+U(y_n)}{2}\right\|&=\lim_n\|\frac{x_n+y_n}{2}\|=1,
\end{align}
then we deduce that for every $n\in\mathbb{ N }$  $U(x_n)$, $U(y_n)$ and
$\frac{U(x_n)+U(y_n)}{2}$ have a unique norm-preserving extension
from $M^*$ to $Y^{**}$ denoted by $\overline{U(x_n)}$,
$\overline{U(y_n)}$ and $\overline{\frac{U(x_n)+U(y_n)}{2}}$,
respectively such that
\begin{align}\label{e2}
\overline{\frac{U(x_n)+U(y_n)}{2}}&=\frac{\overline{U(x_n)}+\overline{U(y_n)}}{2}.
\end{align}
Finally, it follows from definition \ref{defi}, equality (\ref{equality}),
(\ref{e}) and (\ref{e2}) that for every $x^*\in S_{X^*}$,

$$
\lim_n \langle x^*,x_n-y_n\rangle=\lim_n\langle \varphi(x^*),
U(x_n)-U(y_n)\rangle$$
$$\;\;\;\;\;\;\;\;\;\;\;\;\;\;\;\;\;\;\;\;\;\;\;\;\;\;\;\;\;\;=\lim_n\langle \varphi(x^*), \overline{U(x_n)}-\overline{U(y_n)}\rangle=0.$$
Hence (\ref{e1}) holds, and by Definition \ref{defi} $X$ is weakly
uniformly rotund.

(iii-vi) It follows from the assumptions of (iii-vi) that $Y$
is reflexive. Thus we can easily deduce from (i) and Proposition
\ref{proposition} that $M^*$ of (i) is  strongly rotund, Fr\'{e}chet smooth, uniformly
rotund and uniformly smooth, respectively. Hence $X$ is strongly rotund, Fr\'{e}chet smooth, uniformly
rotund and uniformly smooth, respectively.
\end{proof}

\begin{fact} \label{fact} A Banach space $X$ is uniformly smooth  (resp. admitting Radon Riesz property, reflexive, rotund, smooth, Fr\'{e}chet smooth, strongly rotund, uniformly rotund) if and only if so is every separable subspace of $X$.
\end{fact}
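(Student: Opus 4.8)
The plan is to prove each of the eight equivalences by the same two-step scheme. The forward implication (``only if'') amounts to showing that the property in question passes to closed subspaces, which is routine; the reverse implication (``if'') is the substance, and I would argue it in contrapositive form: if $X$ fails the property, then the failure is already witnessed by at most countably many vectors, whose closed linear span is a separable subspace that also fails the property. Two Hahn--Banach compatibility facts make this transfer work. First, for a closed subspace $M\subset X$, a sequence converges weakly in $M$ if and only if it converges weakly in $X$ (extend each $g\in M^*$ to $X^*$, restrict each $f\in X^*$ to $M$). Second, the supporting functionals of the $M$-norm at a point of $S_M$ are exactly the restrictions to $M$ of the supporting functionals of the $X$-norm at that point, and likewise for Gateaux derivatives.

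For the forward direction I would simply observe that the defining conditions of rotundity, uniform rotundity, smoothness, Fr\'{e}chet smoothness, and uniform smoothness involve only vectors (and their norms) lying in the subspace, together with functionals that restrict to the subspace; since the $M$-norm is the restriction of the $X$-norm, each inequality, modulus estimate, or differentiability limit is inherited by $M$. Reflexivity passes to closed subspaces by the classical result, and the Radon--Riesz property passes down using the weak-convergence compatibility above. For strong rotundity I would use part (iii) of Proposition \ref{proposition}: strong rotundity is equivalent to the conjunction of reflexivity, rotundity, and the Radon--Riesz property, each of which I have already handled, so strong rotundity passes to and is determined by separable subspaces as a consequence.

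The reverse direction is carried out property by property with an explicit countable witness. If $X$ is not rotund, take $x\neq y\in S_X$ with $\|\tfrac12(x+y)\|=1$; the two-dimensional space $\mathrm{span}\{x,y\}$ is separable and not rotund. If $X$ is not uniformly rotund (resp.\ not uniformly smooth), the defining failure produces sequences $\{x_n\},\{y_n\}\subset S_X$ (together with scalars $\tau_n\to 0$ in the smooth case); their closed span $\overline{\mathrm{span}}\{x_n,y_n\}$ is separable and the same data witness the failure there. If $X$ is not smooth, there is $x_0\in S_X$ with two supporting functionals $f_1\neq f_2$; choosing $y$ with $f_1(y)\neq f_2(y)$, the separable space $M=\mathrm{span}\{x_0,y\}$ carries the two distinct supporting functionals $f_1|_M,f_2|_M$, hence is not smooth. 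If $X$ is not Fr\'{e}chet smooth, then either it is not Gateaux smooth (already handled) or the Gateaux derivative at some $x_0$ fails to be a Fr\'{e}chet derivative, yielding $t_n\to 0$ and $y_n\in S_X$ witnessing nonuniform convergence; the separable space $\overline{\mathrm{span}}(\{x_0\}\cup\{y_n\})$ then fails Fr\'{e}chet smoothness at $x_0$. For the Radon--Riesz property, a sequence $x_n\to x$ weakly with $\|x_n\|\to\|x\|$ but $x_n\not\to x$ in norm spans a separable subspace in which, by weak-convergence compatibility, the same sequence witnesses the failure. Finally, strong rotundity is again reduced to reflexivity, rotundity, and Radon--Riesz via Proposition \ref{proposition}(iii).

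The main obstacle is reflexivity, because its failure is not a priori witnessed by countably many vectors. Here I would invoke the Eberlein--\v{S}mulian theorem: if $X$ is non-reflexive, some bounded sequence $\{x_n\}$ has no weakly convergent subsequence in $X$; setting $M=\overline{\mathrm{span}}\{x_n\}$, which is separable, the weak-convergence compatibility shows that a weakly convergent subsequence in $M$ would also converge weakly in $X$, so $M$ admits no such subsequence and is therefore non-reflexive. The remaining delicate points are the smooth and Fr\'{e}chet-smooth cases, where one must ensure that passing to $M$ does not accidentally identify distinct supporting functionals or restore uniformity of the derivative; both are controlled by the Hahn--Banach restriction and extension facts noted at the outset, which guarantee that the supporting functionals and Gateaux derivatives of the subspace norm are precisely the restrictions of the ambient ones.
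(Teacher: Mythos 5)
Your proposal is correct and follows essentially the same route as the paper: the paper (in the remark following the Fact) proves it ``by definition'' via the contrapositive, extracting a countable family of witnesses to the failure of the property and observing that their closed linear span is a separable subspace inheriting that failure, illustrated there only for uniform smoothness. Your write-up carries out this same scheme for all eight properties, adding the standard ingredients the paper leaves implicit (Hahn--Banach restriction/extension compatibility, Eberlein--\v{S}mulian for reflexivity, and Proposition \ref{proposition}(iii) to reduce strong rotundity to reflexivity, rotundity and Radon--Riesz), so it is a more detailed rendering of the paper's argument rather than a different one.
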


\begin{remark}
 Since $Y$ in Proposition \ref{prop2} (iii-v) is reflexive, we can also apply Theorem \ref{the}, indeed Corollary \ref{cor} and the above classical fact to prove Proposition \ref{prop2} (iii-vi) easily and immediately. In fact, we can prove the above classical Fact \ref{fact} by definition. We take the uniform smoothness for an example, it suffices to show that $X$ is uniformly smooth if every separable subspace of it is uniformly smooth (Obviously, this assumption implies that $X$ is smooth). If it is not uniformly smooth, then there exists a sequence $\{(x_n,y_n)\}_{n=1}^\infty\subset S_X \times S_X$ such that the limit of the definition (v) of section 2 exists but not uniformly for $\{(x_n,y_n)\}_{n=1}^\infty\subset S_X \times S_X$. Then $\overline{\rm span}{\{x_n, y_n\}_{n=1}^{\infty}}$ is not uniformly smooth, which is a contradiction.
\end{remark}

We leave open the following questions about
$\varepsilon$-isometric embeddings.

\begin{problem}Suppose that $X$, $Y$ are Banach spaces, $\varepsilon> 0$,  and $f$ is
an $\varepsilon$-isometry from $X$ into $Y$ with $f(0)=0$.

(i) Does there exist an isometry from $X$ into $Y$?

(ii) Can we characterize the space $X$ ($Y$) satisfying that for
every $Y$ ($X$), if such $f$ exists, then there is an isometry from
$X$ into $Y$?

(iii) If, in addition, $Y$ has some property ($P$) (for example,
smoothness, rotundity), so does $X$?

\end{problem}

{\bf Acknowledgements.}

The authors thank the referee for many helpful and kind suggestions. This work was partially done while the first author was visiting Texas A$\&$M University and in Analysis and Probability Workshop at Texas A$\&$M University which was funded by NSF Grant. The first author would like to thank Professor Thomas Schlumprecht for the invitation. Both authors also expresses their appreciation to Professor Lixin Cheng for very helpful comments.

The first author was supported by the China Scholarship Council, the second author was supported by the Natural Science Foundation of China, grant 11201353.

\bibliographystyle{amsplain}

\end{document}